\DeclareMathAlphabet{\mathpzc}{OT1}{pzc}{m}{it}
\newtheorem{remark}[theorem]{Remark}
\numberwithin{equation}{section}
\newcommand{\Nin}{\,{\mbox{\,\raisebox{6.0pt} {\tiny$\circ$} \kern-11.1pt}\N }}
\newcommand{\Ninn}{{\mbox{\,\raisebox{4.5pt} {\tiny$\circ$} \kern-8.8pt}\N }}
\newcommand{\osc}{{\textup{\textsf{osc}}}}
\title{An a posteriori error analysis for an optimal control problem involving the fractional Laplacian\thanks{The first author's research has been partially supported by the NSF grant DMS-1521590. The second author's research has been partially supported by CONICYT through FONDECYT project 3160201.}}
\author{Harbir Antil\thanks{Department of Mathematical Sciences,
George Mason University, Fairfax, VA 22030, USA. 
\texttt{hantil@gmu.edu}}.
\and
Enrique Ot\'arola\thanks{Departamento de Matem\'atica, 
Universidad T\'ecnica Federico Santa Mar\'ia, Valpara\'iso, Chile. 
\texttt{enrique.otarola@usm.cl}.}}
\date{Draft version of \today.}
\begin{document}

\maketitle
\begin{abstract}
In a previous work, we introduced a discretization scheme for a constrained optimal control problem involving the fractional Laplacian. For such a control problem, we derived optimal a priori error estimates that demand the convexity of the domain and some compatibility conditions on the data. 
% With \EO{the} aim to overcome such restrictions, 
To relax such restrictions, in this paper, we introduce and analyze an efficient and, under certain assumptions, reliable a posteriori error estimator.
% This is the first work that addresses this. 
We realize the fractional Laplacian as the Dirichlet-to-Neumann map for a nonuniformly elliptic problem posed on a semi--infinite cylinder in one more spatial dimension. This extra dimension further motivates the design of an posteriori error indicator. The latter is defined as the sum of three contributions, which come from the discretization of the state and adjoint equations and the control variable. The indicator for the state and adjoint equations relies on an anisotropic error estimator in Muckenhoupt weighted Sobolev spaces. The analysis is valid in any dimension. On the basis of the devised a posteriori error estimator, we design a simple adaptive strategy that exhibits optimal experimental rates of convergence.
\end{abstract}

\begin{keywords}
linear-quadratic optimal control problem, fractional diffusion, nonlocal operators, 
% \HA{non-convex domain, incompatible data,} 
a posteriori error estimates, anisotropic estimates, 
adaptive algorithm.
\end{keywords}

\begin{AMS}
35R11,    %%   Fractional partial differential equations
35J70,    %%   Degenerate elliptic equations
49J20,    %%   Optimal control problems involving partial differential equations
49M25,    %%   Discrete approximations
65N12,    %%   Stability and convergence of numerical methods
65N30,    %%   Finite elements, Rayleigh-Ritz and Galerkin methods, finite methods;
65N50.    %%   Mesh generation and refinement
\end{AMS}

%%%%%%%%%%%%%%%%%%%%%%%%%%%%%%%%%%%%%%%%%%%%%%%%%%%%%%%%%%%%%%%%%%%%%%%%%%%%%%%%%%%%%%
\section{Introduction}
\label{sec:introduccion}
%%%%%%%%%%%%%%%%%%%%%%%%%%%%%%%%%%%%%%%%%%%%%%%%%%%%%%%%%%%%%%%%%%%%%%%%%%%%%%%%%%%%%%
In this work we shall be interested in the derivation and analysis of a computable, efficient and, under certain assumptions, reliable a posteriori error estimator for a constrained linear-quadratic optimal control problem involving fractional powers of the Dirichlet Laplace operator. To the best of our knowledge, this is the first work that addresses this problem. To make matters precise, for $n\ge1$, we let $\Omega$ be an open and bounded polytopal domain of $\R^n$ with Lipschitz boundary $\partial \Omega$. Given $s \in (0,1)$, and a desired state $\usf_d: \Omega \rightarrow \mathbb{R}$, we define the cost functional
\begin{equation}
\label{def:J}
J(\usf,\zsf)= \frac{1}{2}\| \usf - \usf_{d} \|^2_{L^2(\Omega)} + 
\frac{\mu}{2} \| \zsf\|^2_{L^2(\Omega)},
\end{equation}
where $\mu > 0$ is the so-called regularization parameter. With these ingredients at hand, we define the \emph{fractional optimal control problem} as follows: Find
\begin{equation}
\label{def:minJ}
 \text{min }J(\usf,\zsf),
\end{equation}
subject to the \emph{fractional state equation}
\begin{equation}
\label{eq:fractional}
\Laps \usf = \zsf  \text{ in } \Omega, \qquad \usf = 0   \text{ on } \partial \Omega, \\
\end{equation}
and the \emph{control constraints}
\begin{equation}
 \label{eq:control_constraints}
\asf(x') \leq \zsf(x') \leq \bsf(x') \quad\textrm{a.e.~~} x' \in \Omega . 
\end{equation}
The operator $\Laps$, with $s \in (0,1)$, denotes the fractional powers of the Dirichlet Laplace operator, which for convenience we will simply call the \emph{fractional Laplacian}. The functions $\asf$ and $\bsf$ both belong to $L^2(\Omega)$ and satisfy the property $\asf(x') \leq \bsf(x')$ for almost every $x' \in \Omega$. 
 
A rather incomplete list of problems where fractional derivatives and fractional diffusion appears includes: mechanics \cite{atanackovic2014fractional}, where they are used to model viscoelastic behavior \cite{MR2035411}, turbulence \cite{wow,NEGRETE} and the hereditary properties of materials \cite{MR1926470}; diffusion processes \cite{Abe2005403,PSSB:PSSB2221330150}, in particular processes in disordered media, where the disorder may change the laws of Brownian motion and thus lead to anomalous diffusion \cite{MR1736459,MR1081295}; nonlocal electrostatics \cite{ICH}; finance \cite{MR2064019}; image processing \cite{GH:14}; biophysics \cite{bio}; chaotic dynamical systems \cite{MR1604710} and many others \cite{BV,MR2025566}. Optimal control problems arise naturally in these applications and then it is essential to design numerical schemes to efficiently approximate them.

The analysis of problems involving the fractional Laplacian is delicate and involves fine results in harmonic analysis \cite{Landkof,Silvestre:2007,Stein}; one of the main difficulties being the nonlocality of the operator. This difficulty has been resolved to some extent by L. Caffarelli and L. Silvestre \cite{CS:07}, who have proposed a technique that turned out to be a breakthrough and has paved the way to study fractional laplacians using \emph{local} techniques. Namely, any power $s \in (0,1)$ of the fractional Laplacian in $\R^n$ can be realized as an operator that maps a Dirichlet boundary condition to a Neumann-type condition via an extension problem on the upper half-space $\R_{+}^{n+1}$. This result was later adapted in \cite{CDDS:11,ST:10} to bounded domains $\Omega$, thus obtaining an extension problem posed on the semi-infinite cylinder $\C = \Omega \times (0,\infty)$. This extension corresponds to the following mixed boundary value problem:
\begin{equation}
\label{eq:alpha_harm}
  \DIV\left( y^\alpha \nabla \ue \right) = 0 \text{ in } \C, \quad \ue = 0 \text{ on } \partial_L \C, \quad
  \partial_{\nu^\alpha} \ue = d_s \zsf \quad \text{ on } \Omega \times \{0\},
\end{equation}
where $\partial_L \C= \partial \Omega \times [0,\infty)$ is the lateral boundary of $\C$ and $d_s = 2^{\alpha}\Gamma(1-s)/\Gamma(s)$ is a positive normalization constant. The parameter $\alpha$ is defined as $\alpha = 1-2s \in (-1,1)$ and the conormal exterior derivative of $\ue$ at $\Omega \times \{ 0 \}$ is
\begin{equation}
\label{def:lf}
\partial_{\nu^\alpha} \ue = -\lim_{y \rightarrow 0^+} y^\alpha \ue_y.
\end{equation}
We call $y$ the \emph{extended variable} and call the dimension $n+1$ in $\R_+^{n+1}$ the \emph{extended dimension} of problem \eqref{eq:alpha_harm}. The limit in \eqref{def:lf} must be understood in the distributional sense; see \cite{CS:07,CDDS:11,ST:10}. With these elements at hand, we then write the fundamental result by  L. Caffarelli and L. Silvestre \cite{CS:07,CDDS:11,ST:10}: the fractional Laplacian and the Dirichlet-to-Neumann map of problem \eqref{eq:alpha_harm} are related by 
$
  d_s \Laps \usf = \partial_{\nu^\alpha} \ue
$
in $\Omega$. 

The use of the aforementioned localization techniques for the numerical treatment of problem \eqref{eq:fractional} followed not so long after \cite{NOS}. In this reference, the authors propose the following technique to solve problem \eqref{eq:fractional}: given $\zsf$, solve \eqref{eq:alpha_harm}, thus obtaining a function $\ue$; setting $\usf(x') = \ue (x',0)$, the solution to \eqref{eq:fractional} is obtained. The implementation of this scheme uses standard components of finite element analysis, while its analysis combines asymptotic properties of Bessel functions \cite{Abra}, elements of harmonic analysis \cite{Javier, Muckenhoupt} and a polynomial interpolation theory on weighted spaces \cite{DL:05,NOS2}. The latter is valid for tensor product elements that exhibit a large aspect ratio in $y$ (anisotropy), which is necessary to fit the behavior of $\ue(x',y)$ with $x' \in \Omega$ and $y>0$. The main advantage of this scheme is that it solves the local problem \eqref{eq:alpha_harm} instead of dealing with $\Laps$ in \eqref{eq:fractional}. However, this comes at the expense of incorporating one more dimension to the problem; issue that has been resolved to some extent with the design of fast solvers \cite{CNOS} and adaptive finite element methods (AFEMs) \cite{CNOS2}.

Exploiting the ideas developed in \cite{NOS}, in the previous work \cite{AO}, we have proposed two numerical strategies to approximate the solution to \eqref{def:minJ}--\eqref{eq:control_constraints}. Invoking the localization results of \cite{CS:07,CDDS:11,ST:10}, we have considered an equivalent optimal control problem: $\min J(\ue(\cdot,0),\zsf)$ subject to the \emph{linear state equation} \eqref{eq:alpha_harm} and the \emph{control constraints} \eqref{eq:control_constraints}. Since \eqref{eq:alpha_harm} is posed on the semi-infinite cylinder $\C$, we have then introduced a truncated optimal control problem and analyzed its approximation properties. On the basis of this, we have proposed two schemes based on the discretization of the state and adjoint equations with first-degree tensor product finite elements on anisotropic meshes: the variational approach \cite{Hinze:05} and a fully discrete scheme that discretizes the set of controls by piecewise constant functions \cite{ACT:02,CT:05}. The latter yields an optimal error estimate for the control approximation: If $\Omega$ is convex, $\usf_d \in \mathbb{H}^{1-s}(\Omega)$, and $\asf, \bsf \in \mathbb{R}$ are such that $\asf \leq 0 \leq \bsf$ for $s \in (0,\tfrac{1}{2}]$, then
\begin{equation}
\label{eq:a_priori}
 \| \ozsf - \bar{Z} \|_{L^2(\Omega)} \lesssim | \log N |^{2s} N^{-\frac{1}{n+1}},
\end{equation}
where $\ozsf$ denotes the optimal solution to fractional optimal control problem, $\bar{Z}$ corresponds to the optimal solution of the discrete counterpart of \eqref{def:minJ}--\eqref{eq:control_constraints} and $N$ denotes the number of the degrees of freedom of the underlying mesh.

Since the aforementioned scheme incorporates one extra dimension, it raises the following question: How efficient is this method? A quest for an answer to this question motivates the study of AFEMs, since it is known that they constitute an efficient class of numerical methods for approximating the solution to optimal control problems \cite{MR1780911,HHIK,KRS}: they allow for their resolution with relatively modest computational resources. In addition, they can achieve optimal performance, measured as error versus degrees of freedom, in situations when classical FEM cannot \cite{KRS,NV,NSV:09}. An essential ingredient of AFEMs is an posteriori error estimator, which is a computable quantity that depends on the discrete solution and data, and provides information about the local quality of the approximate solution. For linear second-order elliptic boundary value problems, the theory has attained a mature understanding; see \cite{AObook,MR1770058,NSV:09,NV,Verfurth} for an up-to-date discussion including also the design of AFEMs, their convergence and optimal complexity. In contrast to this well-established theory, the a posteriori error analysis for constrained optimal control problem has not been fully understood yet; the main source of difficulty is its inherent nonlinear feature. We refer the reader to \cite{KRS} for an for an up-to-date discussion.

AFEMs for the fractional optimal control problem are also motivated by the fact that the a priori error estimate \eqref{eq:a_priori} requires $\ozsf \in \mathbb{H}^{1-s}(\Omega)$, which in turn demands $\Omega$ convex, $\usf_d \in \mathbb{H}^{1-s}(\Omega)$ and $\asf \leq 0 \leq \bsf$ for $s \in (0,\tfrac{1}{2}]$. If one of these conditions does not hold, the optimal control $\ozsf$ may have singularities in the $x'$-variables and thus exhibits fractional regularity. Consequently, quasi-uniform refinement of $\Omega$ would not result in an efficient solution technique; see \cite[section 6.3]{NOS} for an illustration of this situation at the level of solving the state equation \eqref{eq:alpha_harm}.

The main contribution of this work is the design and analysis of a computable, efficient and, under certain assumptions, reliable a posteriori error estimator for the fractional optimal control problem \eqref{def:minJ}--\eqref{eq:control_constraints}. As it was highlighted before, there is undoubtedly need for developing such an estimator and this is the first work that addresses this question for problem \eqref{def:minJ}--\eqref{eq:control_constraints}. Given a mesh $\T$ and corresponding approximations $\oue_{\T}$, $\ope_{\T}$ and $\ozsf_{\T}$, the proposed error indicator is built on the basis of three contributions: 
\[
 \E_{\textrm{ocp}} = \E_{\ue} + \E_{\pe} + \E_{\zsf},
\]
where $\E_{\ue}$ and $\E_{\pe}$ correspond to the a anisotropic posteriori error estimator on weighted Sobolev spaces of \cite{CNOS2}, for the state and adjoint equations, respectively. The error indicator $\E_{\zsf}$ is defined as the $\ell^2$-sum of the local contributions $\E_{\zsf}(\ozsf_{\T},\ope_{\T}; T) = \| \ozsf_{\T} - \Pi(-\tfrac{1}{\mu} \ope_{\T}(\cdot,0))\|_{L^2(\Omega)}$, with $T \in \T$ and $\Pi(v) = \min \{ \bsf, \max \{\asf,v\} \}$. We present an analysis for $\E_{\textrm{ocp}}$, we prove its efficiency and, under certain assumptions, its reliability. We remark that the devised error estimator is able to deal with both: the natural anisotropy of the mesh $\T$ in the extended variable and the degenerate coefficient $y^{\alpha}$. This approach is of value not only for the fractional optimal control problem, but in general for control problem involving anisotropic meshes since rigorous anisotropic a posteriori error estimators are scarce in the literature.
%%%%%%%%%%%%%%%%%%%%%%%%%%%%%%%%%%%%%%%%%%%%%%%%%%%%%%%%%%%%%%%%%%%%%%%%%%%%%%%%%%%%%%%%%%%%%%%%%%%%%%%%%%%%%%%%%%%%%%%%%%%%%%%%%%%%%%%%%%%%%%%%%%%%%%%%%%%%%%%%%%%%%%%%%%%%%
\section{Notation and preliminaries}
\label{sec:Prelim}
%%%%%%%%%%%%%%%%%%%%%%%%%%%%%%%%%%%%%%%%%%%%%%%%%%%%%%%%%%%%%%%%%%%%%%%%%%%%%%%%%%%%%%%%%%%%%%%%%%%%%%%%%%%%%%%%%%%%%%%%%%%%%%%%%%%%%%%%%%%%%%%%%%%%%%%%%%%%%%%%%%%%%%%%%%%%%
Throughout this work $\Omega$ is an open and bounded polytopal domain of $\R^n$ ($n\geq1$) with Lipschitz boundary $\partial\Omega$. We define the semi-infinite cylinder with base $\Omega$ and its lateral boundary, respectively, by
$
\C = \Omega \times (0,\infty)
$
and 
$
\partial_L \C  = \partial \Omega \times [0,\infty).
$
Given $\Y>0$, we define the truncated cylinder 
$
  \C_\Y = \Omega \times (0,\Y)
$
and $\partial_L\C_\Y$ accordingly. If $x\in \R^{n+1}$, we write 
$
  x =  (x^1,\ldots,x^n, x^{n+1}) = (x', x^{n+1}) = (x',y),
$
with $x^i \in \R$ for $i=1,\ldots,{n+1}$, $x' \in \R^n$ and $y\in\R$; this notation distinguishes the extended dimension $y$.

We denote by $\Laps$, $s \in (0,1)$, a fractional power of Dirichlet Laplace operator $(-\Delta)$. The parameter $\alpha$ belongs to $(-1,1)$ and is related to the power $s$ of the fractional Laplacian $\Laps$ by the formula $\alpha = 1 -2s$.

Finally, the relation $a \lesssim b$ indicates that $a \leq Cb$, with a constant $C$ that does not depend on $a$ or $b$ nor the discretization parameters. The value of $C$ might change at each occurrence.

%%%%%%%%%%%%%%%%%%%%%%%%%%%%%%%%%%%%%%%%%%%%%%%%%%%%%%%%%%%%%%%%%%%%%%%%%%%%%%%%%%%%%%%%%%%%%%%%%%%%%%%%%%%%%%%%%%%%%%%%%%%%%%%%%%%%%%%%%%%%%%%%%%%%%%%%%%%%%%%%%%%%%%%%%%%%%
\subsection{The fractional Laplace operator}
\label{subsec:fractional_Laplacian}
%%%%%%%%%%%%%%%%%%%%%%%%%%%%%%%%%%%%%%%%%%%%%%%%%%%%%%%%%%%%%%%%%%%%%%%%%%%%%%%%%%%%%%%%%%%%%%%%%%%%%%%%%%%%%%%%%%%%%%%%%%%%%%%%%%%%%%%%%%%%%%%%%%%%%%%%%%%%%%%%%%%%%%%%%%%%%
We adopt the spectral definition for the fractional powers of the Dirichlet Laplace operator \cite{CDDS:11,NOS}. The operator $(-\Delta)^{-1}:L^2(\Omega) \rightarrow L^2(\Omega)$ that solves $-\Delta w = f$ in $\Omega$ and $w = 0$ on $\partial \Omega$, is compact, symmetric and positive, whence its spectrum $\{ \lambda_k^{-1} \}_{k \in \mathbb{N}}$ is discrete, real, positive and accumulates at zero. Moreover, the eigenfunctions 
\[
 \{ \varphi_k \}_{k\in \mathbb{N}}: \quad -\Delta \varphi_k = \lambda_k \varphi_k \textrm{ in } \Omega, \quad \varphi_k = 0 \textrm{ on } \Omega, \quad k \in \mathbb{N},
\]
form an orthonormal basis of $L^2(\Omega)$. Fractional powers of $(-\Delta)$ can be defined by
\begin{equation*}
  (-\Delta)^s w  := \sum_{k=1}^\infty \lambda_k^{s} w_k \varphi_k, \qquad w \in C_0^{\infty}(\Omega), \qquad s \in (0,1),
\end{equation*} 
where $w_k = \int_{\Omega} w \varphi_k $. By density we extend this definition to
\begin{equation*}
\label{def:Hs}
  \Hs = \left\{ w = \sum_{k=1}^\infty w_k \varphi_k: 
  \sum_{k=1}^{\infty} \lambda_k^s w_k^2 < \infty \right\} = [H^1_0(\Omega),L^2(\Omega)]_{1-s}; 
\end{equation*}
see \cite{NOS} for details. For $ s \in (0,1)$ we denote by $\Hsd$ the dual space of $\Hs$.

\subsection{The Caffarelli--Silvestre extension problem}
\label{subsec:weighted}
In this section we explore problem \eqref{eq:alpha_harm} and its relation with the nonlocal problem \eqref{eq:fractional}; we refer the reader to \cite{CS:07,CDDS:11,NOS,ST:10} for details. Since $\alpha \in (-1,1)$, problem \eqref{eq:alpha_harm} is nonuniformly elliptic and thus it requires to introduce weighted Lebesgue and Sobolev spaces for its description. Let $E$ be an open set in $\R^{n+1}$. We define $L^2(|y|^{\alpha},E)$ as the Lebesgue space for the measure $|y|^\alpha \diff x$. We also define the weighted Sobolev space $H^1(|y|^{\alpha},E) := \{ w \in L^2(|y|^{\alpha},E): | \nabla w | \in L^2(|y|^{\alpha},E) \}$, which we endow with the norm
\begin{equation}
\label{wH1norm}
\| w \|_{H^1(|y|^{\alpha},E)} =
\left(  \| w \|^2_{L^2(|y|^{\alpha},E)} + \| \nabla w \|^2_{L^2(|y|^{\alpha},E)} \right)^{\frac{1}{2}}.
\end{equation}
Since $\alpha = 1-2s \in (-1,1)$, the weight $|y|^\alpha$ belongs to the Muckenhoupt class $A_2(\R^{n+1})$ \cite{Javier,Turesson}. Consequently, $H^1(|y|^{\alpha},D)$ is Hilbert and $C^{\infty}(\D) \cap H^1(|y|^{\alpha},D)$ is dense in $H^1(|y|^{\alpha},D)$ (cf.~\cite[Proposition 2.1.2, Corollary 2.1.6]{Turesson} and \cite[Theorem~1]{GU}).

The natural space to seek for a weak solution to problem \eqref{eq:alpha_harm} is 
\begin{equation*}
  \label{HL10}
  \HL(y^{\alpha},\C) := \left\{ w \in H^1(y^\alpha,\C): w = 0 \textrm{ on } \partial_L \C\right\}.
\end{equation*}

We recall the following \emph{weighted Poincar\'e inequality} \cite[inequality (2.21)]{NOS}:
\begin{equation*}
\label{Poincare_ineq}
\| w \|_{L^2(y^{\alpha},\C)} \lesssim \| \nabla v \|_{L^2(y^{\alpha},\C)}
\quad \forall w \in \HL(y^{\alpha},\C).
\end{equation*}
This yields that the seminorm on $\HL(y^{\alpha},\C)$ is equivalent to \eqref{wH1norm}. For $w \in H^1(y^{\alpha},\C)$ $\tr w$ denotes its trace onto $\Omega \times \{ 0 \}$. We recall (\cite[Prop.~2.5]{NOS} and \cite[Prop.~2.1]{CDDS:11})
\begin{equation}
\label{Trace_estimate}
\tr \HL(y^{\alpha},\C) = \Hs,
\qquad
  \|\tr w\|_{\Hs} \leq C_{\tr} \| w \|_{\HLn(y^{\alpha},\C)}.
\end{equation}
We must mention that $C_{\tr} \leq d_s^{-\frac{1}{2}}$ \cite[section 2.3]{CNOS2}, where $d_s = 2^{\alpha}\Gamma(1-s)/\Gamma(s)$. This will be useful in the analysis of the proposed a posteriori error indicator.

We conclude this section with the fundamental result by Caffarelli and Silvestre \cite{CS:07,CDDS:11,ST:10}: If $\usf \in \Hs$ and $\ue \in \HL(y^{\alpha},\C)$ solve \eqref{eq:fractional} and \eqref{eq:alpha_harm}, respectively, then
\[
 d_s \Laps \usf = \partial_{\nu^{\alpha}} \ue = -\lim_{y \rightarrow 0^+} y^{\alpha} \ue_y,
\]
in the sense of distributions. Here, $s \in (0,1)$ and $\alpha = 1-2s \in (-1,1)$.

%%%%%%%%%%%%%%%%%%%%%%%%%%%%%%%%%%%%%%%%%%%%%%%%%%%%%%%%%%%%%%%%%%%%%%%%%%%%%%%%%%%%%%%%%%%%%%%%%%%%%%%%%%%%%%%%%%%%%%%%%%%%%%%%%%%%%%%%%%%%%%%%%%%%%%%%%%%%%%%%%%%%%%%%%%
\section{A priori error estimates}
\label{sec:apriori_control}
%%%%%%%%%%%%%%%%%%%%%%%%%%%%%%%%%%%%%%%%%%%%%%%%%%%%%%%%%%%%%%%%%%%%%%%%%%%%%%%%%%%%%%%%%%%%%%%%%%%%%%%%%%%%%%%%%%%%%%%%%%%%%%%%%%%%%%%%%%%%%%%%%%%%%%%%%%%%%%%%%%%%%%%%%%

In an effort to make this work self-contained, in this section we review the results of \cite{AO}, where an a priori error analysis for a fully discrete approximation of the fractional optimal control problem is investigated. This will also serve to make clear the limitations of this theory.

%%%%%%%%%%%%%%%%%%%%%%%%%%%%%%%%%%%%%%%%%%%%%%%%%%%%%%%%%%%%%%%%%%%%%%%%%%%%%%%%%%%%%%%%%%%%%%%%%%%%%%%%%%%%%%%%%%%%%%%%%%%%%%%%%%%%%%%%%%%%%%%%%%%%%%%%%%%%%%%%%%%%%%%%%%
\subsection{The extended optimal control problem}
\label{subsec:extended}
%%%%%%%%%%%%%%%%%%%%%%%%%%%%%%%%%%%%%%%%%%%%%%%%%%%%%%%%%%%%%%%%%%%%%%%%%%%%%%%%%%%%%%%%%%%%%%%%%%%%%%%%%%%%%%%%%%%%%%%%%%%%%%%%%%%%%%%%%%%%%%%%%%%%%%%%%%%%%%%%%%%%%%%%%%

We start by recalling an equivalent problem to \eqref{def:minJ}--\eqref{eq:control_constraints}: the \emph{extended optimal control problem}. The main advantage of this problem is its local nature and is based on the Cafarelli--Silvestre extension result. To describe it, we define the set of \emph{admissible controls} as 
\begin{equation}
 \label{def:Zad}
 \Zad= \{ \wsf \in L^2(\Omega): \asf(x') \leq \wsf(x') \leq \bsf(x') \textrm{~~a.e~~}  x' \in \Omega \},
\end{equation}
where $\asf,\bsf \in L^2(\Omega)$ and satisfy the property $\asf(x') \leq \bsf(x')$ a.e. $x' \in \Omega$. The extended optimal control problem problem is then defined as follows: Find
$
 \text{min } J(\tr \ue,\zsf),
$
subject to the \emph{linear} state equation
\begin{equation}
\label{eq:alpha_harm_weak}
  a(\ue,\phi) = \langle \zsf, \tr \phi \rangle
    \quad \forall \phi \in \HL(y^{\alpha},\C),
\end{equation}
and the control constraints 
$
 \zsf \in \Zad.
$
The functional $J$ is defined by \eqref{def:minJ} with $\usf_d \in L^2(\Omega)$ and $\mu >0$. For $w, \phi \in \HL(y^{\alpha},\C)$, the bilinear form $a$ is defined by
\begin{equation*}
% \label{def:a}
a(w,\phi) =  \frac{1}{d_s} \int_{\C} y^{\alpha} \nabla w \cdot \nabla \phi
\end{equation*}
and $\langle \cdot, \cdot \rangle$ denotes the duality pairing between $\Hs$ and $\Hsd$ which, as a consequence of \eqref{Trace_estimate}, is well defined for $\zsf \in \Hsd$ and $\phi \in \HL(y^{\alpha},\C)$.

The extended optimal control problem has a unique optimal solution $(\oue,\ozsf) \in \HL(y^{\alpha},\C) \times \Hs$ \cite[Theorem 3.11]{AO} and is equivalent to the fractional optimal control problem: $\tr \oue = \ousf$ \cite[Theorem 3.12]{AO}. 

%%%%%%%%%%%%%%%%%%%%%%%%%%%%%%%%%%%%%%%%%%%%%%%%%%%%%%%%%%%%%%%%%%%%%%%%%%%%%%%%%%%%%%%%%%%%%%%%%%%%%%%%%%%%%%%%%%%%%%%%%%%%%%%%%%%%%%%%%%%%%%%%%%%%%%%%%%%%%%%%%%%%%%%%%%
\subsection{The truncated optimal control problem}
\label{subsec:truncated}
%%%%%%%%%%%%%%%%%%%%%%%%%%%%%%%%%%%%%%%%%%%%%%%%%%%%%%%%%%%%%%%%%%%%%%%%%%%%%%%%%%%%%%%%%%%%%%%%%%%%%%%%%%%%%%%%%%%%%%%%%%%%%%%%%%%%%%%%%%%%%%%%%%%%%%%%%%%%%%%%%%%%%%%%%%

Since $\C$ is unbounded, problem \eqref{eq:alpha_harm_weak} cannot be directly approximated with finite-element-like techniques. However, as \cite[Proposition 3.1]{NOS} shows, the solution $\ue$ of problem \eqref{eq:alpha_harm_weak} decays exponentially in the extended variable $y$. This suggests to consider a \emph{truncated optimal control problem,} which is based on a truncation of the state equation \eqref{eq:alpha_harm_weak}. To describe it, we define
\[
  \HL(y^{\alpha},\C_\Y) = \left\{ w \in H^1(y^\alpha,\C_\Y): w = 0 \text{ on }
    \partial_L \C_\Y \cup \Omega \times \{ \Y\} \right\},
\]
and for all $w,\phi \in \HL(y^{\alpha},\C_\Y)$, the bilinear form
\begin{equation}
\label{def:a_Y}
a_\Y(w,\phi) = \frac{1}{d_s} \int_{\C_\Y} y^{\alpha}  \nabla w \cdot \nabla \phi.
\end{equation}
The truncated optimal control problem is then defined as follows: Find
$
 \text{min } J(\tr v,\rsf)
$
subject to the truncated state equation
\begin{equation}
\label{eq:alpha_harm_truncated}
  a_\Y(v,\phi) = \langle \rsf, \tr \phi \rangle
    \quad \forall \phi \in \HL(y^{\alpha},\C_\Y)
\end{equation}
and the control constraints 
$
 \rsf \in \Zad.
$
The existence and uniqueness of an optimal pair $(\bar v, \orsf) \in \HL(y^{\alpha},\C_{\T}) \times \Hs$ follows from \cite[Theorem 4.5]{AO}. In addition, we have that the optimal control $\orsf \in \Zad$ verifies the variational inequality
\begin{equation}
\label{eq:VI}
 (\tr \bar{p} + \mu \orsf , \rsf - \orsf )_{L^2(\Omega)} \geq 0 \quad \forall \rsf \in \Zad,
\end{equation}
where $\bar{p} \in \HL(y^{\alpha},\C_\Y)$ denotes the optimal adjoint state and solves
\begin{equation}
\label{eq:p_truncated}
a_\Y(\bar{p},\phi) = (  \tr \bar{v} - \usfd, \tr \phi )_{L^2(\Omega)}.  
\end{equation}

The following approximation properties follow from \cite[Lemma 4.6]{AO}: If $(\oue,\ozsf) \in \HL(y^{\alpha},\C) \times \Hs$ and $(\bar v, \orsf) \in \HL(y^{\alpha},\C_{\Y}) \times \Hs$ solve the extended and truncated optimal control problems, respectively, then
\begin{align*}
% \label{eq:v-v^Y}
  \| \bar \zsf - \bar \rsf \|_{L^2(\Omega)} & \lesssim e^{-\sqrt{\lambda_1} \Y/4} \left(\| \bar{\rsf} \|_{L^2(\Omega)} + \| \usf_d \|_{L^2(\Omega)} \right),\\
% \label{eq:trv-v^Y}
  \| \nabla \left( \ue  - \bar{v}  \right) \|_{L^2(y^{\alpha},\C)} & \lesssim e^{-\sqrt{\lambda_1} \Y/4} \left(\| \bar{\rsf} \|_{L^2(\Omega)} + \| \usf_d \|_{L^2(\Omega)} \right),
\end{align*}
where $\lambda_1$ denotes the first eigenvalue of the operator $-\Delta$.

%%%%%%%%%%%%%%%%%%%%%%%%%%%%%%%%%%%%%%%%%%%%%%%%%%%%%%%%%%%%%%%%%%%%%%%%%%%%%%%%%%%%%%%%%%%%%%%%%%%%%%%%%%%%%%%%%%%%%%%%%%%%%%%%%%%%%%%%%%%%%%%%%%%%%%%%%%%%%%%%%%%%%%%%%%
\subsection{A fully discrete scheme}
\label{subsec:fully}
%%%%%%%%%%%%%%%%%%%%%%%%%%%%%%%%%%%%%%%%%%%%%%%%%%%%%%%%%%%%%%%%%%%%%%%%%%%%%%%%%%%%%%%%%%%%%%%%%%%%%%%%%%%%%%%%%%%%%%%%%%%%%%%%%%%%%%%%%%%%%%%%%%%%%%%%%%%%%%%%%%%%%%%%%%

In this section we recall the fully discrete scheme, proposed in \cite[section 5.3]{AO}, that approximates the solution to \eqref{def:minJ}--\eqref{eq:control_constraints}. We also review its a priori error analysis; see \cite[section 5.3]{AO} for details. To do so in this section, and this section only, we will assume the following regularity result, which is valid if, for instance, the domain $\Omega$ is convex \cite{Grisvard}:
\begin{equation}
\label{eq:Omega_regular}
 \| w \|_{H^2(\Omega)} \lesssim \| \Delta_{x'} w \|_{L^2(\Omega)} \quad \forall w \in H^2(\Omega) \cap H^1_0(\Omega). 
\end{equation}

The analysis of the fully discrete scheme of \cite[section 5.3]{AO} relies on the regularity properties of the optimal pairs $(\oue,\ozsf)$ and $(\bar v, \orsf)$ that solve the extended and truncated optimal control problems, respectively. We review such regularity properties in what follows. The results of \cite[Theorem 2.7]{NOS} reveals that the second order regularity of $\ue$, solving \eqref{eq:alpha_harm_weak}, is much worse in the extended direction, namely
\begin{align}
    \label{eq:reginx}
  \| \Delta_{x'} \ue\|_{L^2(y^{\alpha},\C)} + 
  \| \partial_y \nabla_{x'} \ue \|_{L^2(y^{\alpha},\C)}
  & \lesssim \| \zsf \|_{\Ws}, \\
\label{eq:reginy}
  \| \ue_{yy} \|_{L^2(y^{\beta},\C)} &\lesssim \| \zsf \|_{L^2(\Omega)},
\end{align}
where $\beta > 2\alpha + 1$. These result are also valid for the solution $v$ of problem \eqref{eq:alpha_harm_truncated}; see \cite[Remark 4.4]{NOS3}.

The estimates \eqref{eq:reginx}--\eqref{eq:reginy} have important consequences in the design of efficient numerical techniques to solve \eqref{eq:alpha_harm_weak}; they suggest that a graded mesh in the extended $(n+1)$--dimension must be used \cite[section 5]{NOS}. We recall the construction of the mesh over $\C_{\Y}$ used in \cite{AO,NOS}. First, we consider a graded partition $\mathcal{I}_{\Y}$ of the interval $[0,\Y]$ with mesh points
\begin{equation}
\label{eq:graded_mesh}
  y_k = \left( \frac{k}{M}\right)^{\gamma} \Y, \quad k=0,\dots,M,
\end{equation}
and $\gamma > 3/(1-\alpha)=3/(2s) > 1$. Second, we consider $\T_{\Omega} = \{ K \}$ to be a conforming mesh of $\Omega$, where $K \subset \R^n$ is an element that is isoparametrically equivalent either to the unit cube $[0,1]^n$ or the unit simplex in $\R^n$. We denote by $\Tr_{\Omega}$ the collections of all conforming refinements of an original mesh $\T_{\Omega}^0$. We assume that $\Tr_{\Omega}$ is shape regular \cite{CiarletBook}. We then construct a mesh $\T_{\Y}$ over $\C_{\Y}$ as the tensor product triangulation of $\T_{\Omega} \in \Tr_{\Omega}$ and $\mathcal{I}_{\Y}$. We denote by $\Tr$ the set of all the meshes obtained with this procedure, and recall that $\Tr$ satisfies the following weak shape regularity condition: If $T_1 = K_1 \times I_1$ and $T_2=K_2\times I_2 \in \T_\Y$ have nonempty intersection, then there exists a positive constant $\sigma_{\Y}$ such that
\begin{equation}
\label{eq:weak_shape_reg}
     h_{I_1} h_{I_2}^{-1} \leq \sigma_{\Y},
\end{equation}
where $h_I = |I|$. This weak shape regularity condition allows for anisotropy in the extended variable $y$ \cite{DL:05,NOS,NOS2}.

For $\T_{\Y} \in \Tr$, we define the finite element space 
\begin{equation}
\label{eq:FESpace}
  \V(\T_\Y) = \left\{
            W \in C^0( \overline{\C_\Y} ): W|_T \in \mathcal{P}_1(K) \otimes \mathbb{P}_1(I) \ \forall T \in \T_\Y, \
            W|_{\Gamma_D} = 0
          \right\},
\end{equation}
where $\Gamma_D = \partial_L \C_{\Y} \cup \Omega \times \{ \Y\}$ is the Dirichlet boundary. The space $\mathcal{P}_1(K)$ is $\mathbb{P}_1(K)$ -- the space of polynomials of degree at most $1$, when the base $K$ of $T = K \times I$ is a simplex. If $K$ is a cube, $\mathcal{P}_1(K)$ stand for $\mathbb{Q}_1(K)$ -- the space of polynomials of degree not larger that $1$ in each variable. We also define the space $\U(\T_{\Omega})=\tr \V(\T_{\Y})$, which is simply a $\mathcal{P}_1$ finite element space over the mesh $\T_\Omega$.

Before describing the numerical scheme introduced and developed in \cite{AO}, we recall the regularity properties of the extended and truncated optimal controls $\ozsf$ and $\orsf$, respectively. If $\usf_{d} \in \mathbb{H}^{1-s}(\Omega)$ and $\asf \leq 0 \leq \bsf$ for $s \in (0,\tfrac{1}{2}]$, then $\ozsf \in H^1(\Omega) \cap \mathbb{H}^{1-s}(\Omega)$ \cite[Lemmas 3.5 and 5.9]{AO}. Under the same framework, we have the same result for the truncated optimal control: $\orsf \in H^1(\Omega) \cap \mathbb{H}^{1-s}(\Omega)$ \cite[Lemma 5.9]{AO}.

After all these preparations, we are ready to describe the fully discrete scheme to approximate the fractional optimal control problem. The \emph{fully discrete optimal control problem} reads as follows: 
$
\min J(\tr V , Z),  
$
subject to the discrete state equation
\begin{equation}
\label{def:a_discrete}
a_\Y(V,W) =  ( Z, \tr W )_{L^2(\Omega)} \quad \forall W \in \V(\T_{\Y}),
\end{equation}
and the discrete control constraints
$
Z \in \mathbb{Z}_{ad}(\T_{\Omega}).
$
We recall that the functional $J$, the bilinear form $a_{\Y}$ and the discrete space $\V(\T_{\Y})$ are defined by \eqref{def:J}, \eqref{def:a_Y}, and \eqref{eq:FESpace}, respectively. The discrete and admissible set of controls is defined by
\begin{equation*}
\mathbb{Z}_{ad}(\T_{\Omega}) = \Zad \cap \left\{
            Z \in L^{\infty}( \Omega ): Z|_K \in \mathbb{P}_0(K) \quad \forall K \in \T_\Omega \right\},
\end{equation*}
\ie the space of piecewise constant functions defined on the partition $\T_{\Omega}$ that verifies the control bounds, which we assume to be real constants.

The existence and uniqueness of an optimal pair $(\bar{V}, \bar{Z}) \in \V(\T_\Y) \times \mathbb{Z}_{ad}(\T_{\Omega})$ solving the aforementioned problem is standard \cite[Theorem 5.15]{AO}. In addition, the optimal control $\bar Z \in \mathbb{Z}_{ad}(\T_{\Omega})$ is uniquely characterized by the variational inequality
\begin{equation}
\label{eq:VI_discrete}
(\tr \bar{P} + \mu \bar{Z}, Z- \bar{Z})_{L^2(\Omega)} \geq 0 \quad \forall Z \in \mathbb{Z}_{ad}(\T_{\Omega}),
\end{equation}
where the optimal and discrete adjoint state $\bar{P} \in \V(\T_\Y)$ solves
\begin{equation}
\label{eq:P_discrete}
a_\Y( \bar{P},W) = ( \tr \bar{V} - \usfd , \textrm{tr}_{\Omega} W )_{L^2(\Omega)} \quad \forall W \in \V(\T_{\Y}).
\end{equation}

With the discrete solution $\bar{V} \in \V(\T_\Y)$ at hand, we define
\begin{equation}
\label{eq:U_discrete}
\bar{U}:= \tr \bar{V},
\end{equation} 
and thus obtain a fully discrete approximation $(\bar{U},\bar{Z}) \in  \U(\T_{\Omega}) \times \mathbb{Z}_{ad}(\T_{\Omega})$ of the optimal pair $(\ousf,\ozsf) \in \Hs \times \Zad$ solving the fractional optimal control problem.

To write the a priori error estimates for the fully discrete optimal control problem, we notice that $\#\T_{\Y} = M \, \# \T_\Omega$, and that $\# \T_\Omega \approx M^n$ implies $\#\T_\Y \approx M^{n+1}$. Consequently, if $\T_\Omega$ is quasi-uniform, we have that $h_{\T_{\Omega}} \approx (\# \T_{\Omega})^{-1/n}$. We then have the following result \cite[Corollary 5.17]{AO}.

\begin{theorem}[fractional control problem: error estimate]
\label{th:error_estimate}
Let $(\bar{V},\bar{Z}) $ $\in \V(\T_\Y) \times \Zad$ solves the fully discrete control problem and $\bar{U} \in \U(\T_{\Omega})$ be defined as in \eqref{eq:U_discrete}. If $\Omega$ verifies \eqref{eq:Omega_regular}, $\usf_d \in \Ws$, and $\asf \leq 0 \leq \bsf$ for $s \in (0,\tfrac{1}{2}]$, then we have 
\begin{equation}
\label{fd2}
  \| \ozsf - \bar{Z} \|_{L^2(\Omega)} \lesssim  |\log (\# \T_{\Y})|^{2s}(\# \T_{\Y})^{\frac{-1}{n+1}} 
\left( \| \orsf \|_{H^1(\Omega)} + \| \usfd \|_{\Ws} \right),
\end{equation}
and
\begin{equation}
\label{fd1}
\| \ousf - \bar{U} \|_{\Hs} \lesssim  |\log (\# \T_{\Y})|^{2s}(\# \T_{\Y})^{\frac{-1}{n+1}} 
\left( \| \orsf \|_{H^1(\Omega)} + \| \usfd \|_{\Ws}  \right),
\end{equation} 
where the truncation parameter $\Y$, in the truncated optimal control problem, is chosen such that $\Y \approx \log( \# \T_{\Y} )$.
\end{theorem}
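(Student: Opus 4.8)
The plan is to control $\| \ozsf - \bar Z \|_{L^2(\Omega)}$ and $\| \ousf - \bar U \|_{\Hs}$ by splitting each error into a \emph{truncation} contribution, measuring the passage from the extended problem on $\C$ to the truncated one on $\C_\Y$, and a \emph{discretization} contribution, measuring the Galerkin error on the graded anisotropic mesh $\T_\Y$. For the control I would first write $\| \ozsf - \bar Z \|_{L^2(\Omega)} \le \| \ozsf - \orsf \|_{L^2(\Omega)} + \| \orsf - \bar Z \|_{L^2(\Omega)}$. The first term is bounded by the truncation estimate recalled after \eqref{eq:p_truncated}, namely $\| \ozsf - \orsf \|_{L^2(\Omega)} \lesssim e^{-\sqrt{\lambda_1}\Y/4}(\|\orsf\|_{L^2(\Omega)} + \|\usf_d\|_{L^2(\Omega)})$; the choice $\Y \approx \log(\#\T_\Y)$ makes the exponential factor $\lesssim (\#\T_\Y)^{-1/(n+1)}$, which is subsumed by the target rate. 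Thus the whole difficulty is concentrated in the discretization term $\|\orsf - \bar Z\|_{L^2(\Omega)}$, for which $\Y$ is now fixed.

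To estimate $\|\orsf - \bar Z\|_{L^2(\Omega)}$ I would exploit the two variational inequalities \eqref{eq:VI} and \eqref{eq:VI_discrete}. Testing \eqref{eq:VI} with the admissible control $\bar Z \in \Zad$ and \eqref{eq:VI_discrete} with $\Pi_{\T_\Omega}\orsf$, the $L^2(\Omega)$-orthogonal projection of $\orsf$ onto piecewise constants (which lies in $\mathbb{Z}_{ad}(\T_\Omega)$ because the bounds $\asf,\bsf$ are constants), and adding the two inequalities, a rearrangement produces
\[
\mu \|\orsf - \bar Z\|_{L^2(\Omega)}^2 \le (\tr\bar P, \Pi_{\T_\Omega}\orsf - \orsf)_{L^2(\Omega)} + (\tr\bar p - \tr\bar P, \bar Z - \orsf)_{L^2(\Omega)},
\]
since the term $\mu(\bar Z, \Pi_{\T_\Omega}\orsf - \orsf)_{L^2(\Omega)}$ vanishes by orthogonality of the projection against the piecewise constant $\bar Z$. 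In the first term one replaces $\tr\bar P$ by $\tr\bar P - \Pi_{\T_\Omega}\tr\bar P$, again by orthogonality, so that it becomes a product of two projection errors, of higher order, controlled by the regularity $\orsf \in H^1(\Omega)$ recalled before the statement. In the second term Young's inequality absorbs $\tfrac{\mu}{4}\|\bar Z - \orsf\|_{L^2(\Omega)}^2$ into the left-hand side, leaving the adjoint error $\|\tr\bar p - \tr\bar P\|_{L^2(\Omega)}$ as the decisive quantity.

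The core of the argument is then the weighted anisotropic finite element analysis of the state and adjoint equations. I would bound $\tr\bar p - \tr\bar P$ by inserting the intermediate discrete adjoint $P(\orsf)$ associated with the exact truncated control $\orsf$: the difference $\tr\bar p - \tr P(\orsf)$ is a genuine Galerkin error for the adjoint equation \eqref{eq:p_truncated}, while $\tr P(\orsf) - \tr\bar P$ is driven by the state error $\tr\bar v - \tr\bar V$ and hence, recursively, by $\|\orsf - \bar Z\|_{L^2(\Omega)}$. The Galerkin errors are estimated on the graded mesh \eqref{eq:graded_mesh} using the second-order weighted regularity \eqref{eq:reginx}--\eqref{eq:reginy} together with the anisotropic interpolation theory on Muckenhoupt weighted spaces of \cite{DL:05,NOS,NOS2}; this is what yields the anisotropic rate $(\#\T_\Y)^{-1/(n+1)}$, up to the logarithmic factors in \eqref{fd2}--\eqref{fd1}, and transfers it to the control through the trace estimate \eqref{Trace_estimate}. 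The recursive state-dependence is closed by absorption, the positivity $\mu>0$ guaranteeing that the coupled state--adjoint--control errors do not feed back uncontrollably. This weighted anisotropic error analysis, and the clean decoupling of the control, state and adjoint contributions, is where I expect the main technical effort to lie.

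Finally, for the state error I would write $\|\ousf - \bar U\|_{\Hs} = \|\tr\oue - \tr\bar V\|_{\Hs} \le \|\tr\oue - \tr\bar v\|_{\Hs} + \|\tr\bar v - \tr\bar V\|_{\Hs}$. The first summand is the truncation error of the state, controlled by the approximation property recalled after \eqref{eq:p_truncated} and the trace estimate \eqref{Trace_estimate}; the second combines the state Galerkin error, at the same rate as above, with the already bounded control error $\|\orsf - \bar Z\|_{L^2(\Omega)}$ through the stability of the discrete state equation \eqref{def:a_discrete}. Collecting all contributions and using $\#\T_\Y \approx M^{n+1}$, $h_{\T_\Omega} \approx (\#\T_\Omega)^{-1/n}$ and $\Y \approx \log(\#\T_\Y)$ gives both \eqref{fd2} and \eqref{fd1}.
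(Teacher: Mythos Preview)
The paper does not actually prove this theorem: it is stated as a recollection of \cite[Corollary~5.17]{AO}, with no proof given in the present manuscript. There is therefore no ``paper's own proof'' to compare your proposal against.

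That said, your outline is the standard route and matches what one expects the argument in \cite{AO} to be. The decomposition into truncation error (handled by the exponential decay estimates recalled after \eqref{eq:p_truncated}, combined with the choice $\Y\approx\log(\#\T_\Y)$) and discretization error is correct. For the latter, your manipulation of the two variational inequalities \eqref{eq:VI} and \eqref{eq:VI_discrete}---testing with $\bar Z$ and with the $L^2$-projection $\Pi_{\T_\Omega}\orsf$ respectively, exploiting orthogonality of $\bar Z$ and of $\Pi_{\T_\Omega}\tr\bar P$ against $\orsf-\Pi_{\T_\Omega}\orsf$---is exactly the device of \cite{ACT:02,CT:05} that underlies the fully discrete analysis in \cite[Section~5.3]{AO}. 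The subsequent reduction to Galerkin errors for state and adjoint, bounded via the anisotropic weighted interpolation theory on the graded mesh \eqref{eq:graded_mesh} together with the regularity \eqref{eq:reginx}--\eqref{eq:reginy}, is precisely what produces the rate $|\log(\#\T_\Y)|^{2s}(\#\T_\Y)^{-1/(n+1)}$ in \cite{NOS,AO}. Your treatment of the state error via \eqref{Trace_estimate} and the splitting $\tr\oue-\tr\bar V=(\tr\oue-\tr\bar v)+(\tr\bar v-\tr\bar V)$ is likewise the expected one.

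In short: your proposal is a faithful sketch of the argument that this paper is citing rather than reproducing.
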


\begin{remark}[Domain and data regularity]\rm
The results of Theorem \ref{th:error_estimate} are valid if and only if $\Omega$ is such that \eqref{eq:Omega_regular} holds, $\usf_d \in \Ws$, and $\asf \leq 0 \leq \bsf$ for $s \in (0,\tfrac{1}{2}]$.
\end{remark}

%%%%%%%%%%%%%%%%%%%%%%%%%%%%%%%%%%%%%%%%%%%%%%%%%%%%%%%%%%%%%%%%%%%%%%%%%%%%%%%%%%%%%%%%%%%%%%%%%%%%%%%%%%%%%%%%%%%%%%%%%%%%%%%%%%%%%%%%%%%%%%%%%%%%%%%%%%%%%%%%%%%%%%%%%%
\section{A posteriori error analysis}
%%%%%%%%%%%%%%%%%%%%%%%%%%%%%%%%%%%%%%%%%%%%%%%%%%%%%%%%%%%%%%%%%%%%%%%%%%%%%%%%%%%%%%%%%%%%%%%%%%%%%%%%%%%%%%%%%%%%%%%%%%%%%%%%%%%%%%%%%%%%%%%%%%%%%%%%%%%%%%%%%%%%%%%%%%
The design and analysis of a posteriori error estimators for linear second-order elliptic boundary value problems on isotropic discretizations, \ie meshes where the aspect
ratio of all cells is bounded independently of the refinement level, has achieved a certain degree of maturity. Starting with the pioneering work of Babu{\v{s}}ka and Rheinboldt \cite{BR78}, a great deal of work has been devoted to its study. We refer the reader to \cite{AObook,BSbook,MNS02,NSV:09,NV,Verfurth} for an up-to-date discussion including also the design of AFEMs, their convergence and optimal complexity. In contrast to this well-established theory, the a posteriori error estimation on anisotropic discretizations, i.e., meshes where the cells have disparate sizes in each direction, is still not completely understood. To the best of our knowledge, the first work that introduces an a posteriori error estimator on anisotropic meshes is \cite{MR1389492}. The analysis provided in this work relies on certain assumptions on the mesh \cite[section 2]{MR1389492}, on the exact solution \cite[Definition 3.1]{MR1389492}, and on the discrete solution \cite[Definition 5.2]{MR1389492}. However, no explicit examples of AFEMs satisfying these assumptions are provided and their construction is not evident. Afterwards, the so--called \emph{matching function} is introduced in \cite{MR1785418,MR1777490} for deriving error indicators on anisotropic meshes. The presented analysis relies on the correct alignment of the grid with the exact solution. Indeed, the upper bound for the error involves the matching function, which depends on the error itself and then it does not provide a real computable quantity; see \cite[Theorem 2]{MR1785418} and see \cite[Theorem 5.1]{MR1777490}. The effect of approximating the matching function with a recovered gradient based technique is discussed in \cite{MR1785418,MR1777490}.

To the best of our knowledge, the first paper that attempts to deal with an anisotropic a posteriori error estimator for an optimal control problem is \cite{MR2257635}. In this work, the author proposes, based on the the goal--oriented approach developed in \cite{MR1780911}, an anisotropic error indicator for a parabolic optimal control problem involving the heat equation. However, the presented upper bound for the error \cite[Proposition 7]{MR2257635} depends on the exact solution and therefore, it is not computable; see the discussion in \cite[section 5]{MR2257635}. Later, reference \cite{MR2837575} presents an anisotropic posteriori error estimator for an optimal control problem of a scalar advection--reaction--diffusion equation. The analysis relies on the goal--oriented approach of \cite{MR1780911}, and the a priori and posteriori error analyses of \cite{MR1865506} and \cite{MR1971213}, respectively. The presented upper bound for the error depends on the exact optimal variables and therefore is not computable \cite[Proposition 3.5]{MR2837575}. This shortcoming is circumvented, computationally, by invoking a suitable recovery procedure. 

The main contribution of this work is the design and study of an a posteriori error indicator for the fractional optimal control problem \eqref{def:minJ}--\eqref{eq:control_constraints}. To accomplish this task, we invoke the a posteriori error indicator developed in \cite{CNOS2} that is based on the solution of local problems on stars; we remark that, since problems \eqref{eq:alpha_harm_truncated} and \eqref{eq:p_truncated} involve the coefficient $y^{\alpha}$ $(-1 < \alpha < 1)$, that is not uniformly bounded, the usual residual estimator does not apply. The idea of working on stars goes back to Babu{\v{s}}ka and Miller \cite{MR880421}, who introduced local Dirichlet problems. Later, references \cite{CF:00,MNS02} proposed solving local weighted problems on stars that deliver rather good effectivity indices. A convergence proof of AFEM driven by such error indicators is provided in \cite{MNS02} for a Poisson problem, and in \cite{MR2875241} for a general second-order elliptic PDE; the latter also includes optimal complexity.  We also refer the reader to \cite{10.2307/2007953} for estimators based on solving Neumann problems on elements and their further improvements via the so-called \emph{flux equilibration principle} \cite{AObook}.

Concerning the a posteriori error analysis for \eqref{def:minJ}--\eqref{eq:control_constraints}, we first propose and explore an ideal anisotropic error indicator that is constructed on the basis of solving local problems on \emph{cylindrical stars}. This indicator is able to deal with both: the coefficient $y^{\alpha}$ and the anisotropic mesh $\T_{\Y}$. Under a computationally implementable geometric condition imposed on the mesh, which does not depend on the exact optimal variables, we derive the equivalence between the ideal estimator and the error without oscillation terms. This ideal indicator sets the basis to define a computable error estimator, which, under certain assumptions, is equivalent to the error up to data oscillations terms. 

%%%%%%%%%%%%%%%%%%%%%%%%%%%%%%%%%%%%%%%%%%%%%%%%%%%%%%%%%%%%%%%%%%%%%%%%%%%%%%%%%%%%%%%%%%%%%%%%%%%%%%%%%%%%%%%%%%%%%%%%%%%%%%%%%%%%%%%%%%%%%%%%%%%%%%%%%%%%%%%%%%%%%%%%%%%
\subsection{Preliminaries}
\label{subsec:preliminaries}
%%%%%%%%%%%%%%%%%%%%%%%%%%%%%%%%%%%%%%%%%%%%%%%%%%%%%%%%%%%%%%%%%%%%%%%%%%%%%%%%%%%%%%%%%%%%%%%%%%%%%%%%%%%%%%%%%%%%%%%%%%%%%%%%%%%%%%%%%%%%%%%%%%%%%%%%%%%%%%%%%%%%%%%%%%%

Let us begin the discussion on a posteriori error estimation with some terminology and notation that follows from \cite{CNOS2}. Given a node $z$ on the mesh $\T_{\Y}$, we write $z = (z',z'')$ where $z'$ and $z''$ are nodes on the meshes $\T_{\Omega}$ and $\mathcal{I}_{\Y}$ respectively. 

Given $K \in \T_{\Omega}$, we denote by $\N(K)$ the set of nodes of $K$ and by $\Nin(K)$ the set of interior nodes. With this notation at hand, we define 
$
\N(\T_{\Omega}) = \cup \{ \N(K): K \in \T_\Omega \}
$ and
$
\Nin(\T_{\Omega}) = \cup \{\Nin(K): K \in \T_\Omega \}.
$
Given $T \in \T_{\Y}$, we define $\N(T)$, $\Nin(T)$, and then $\Nin(\T_{\Y})$ and $\N(\T_{\Y})$ accordingly. 

Given $z' \in \N(\T_{\Omega})$, we define the \emph{star} around $z'$ as
\[
  S_{z'} = \bigcup \left\{ K \in \T_\Omega : \ K \ni z' \right\} \subset \Omega 
\]
and the \emph{cylindrical star} around $z'$ as
\begin{equation}
\label{def:cylindrical_star}
  \C_{z'} := \bigcup\left\{ T \in \T_\Y : T = K \times I,\ K \ni z'  \right\}= S_{z'} \times (0,\Y) \subset \C_{\Y}.
\end{equation}

Given $K \in \T_{\Omega}$ we define its \emph{patch} as 
$
  S_K := \bigcup_{z' \in K} S_{z'}.
$
For $T \in \T_\Y$ its patch $S_T$ is defined similarly. Given $z' \in \N(\T_{\Omega})$
we define its \emph{cylindrical patch} as
\[
\D_{z'} := \bigcup  \left\{ \C_{w'}: w' \in S_{z'} \right\} \subset \C_{\Y}.
\]

For each $z' \in \N(\T_{\Omega})$ we set $h_{z'} := \min\{h_{K}: K \ni z' \}$. 
% 
% If $\{ \phi_z: z \in \Nin(\T_{\Y}) \}$ denotes the canonical basis of $\V(\T_{\Y})$ and $W \in \V(\T_{\Y})$, then
% \[
%  W = \sum_{z \in \Ninn(\T_{\Y})} W(z) \phi_z.
% \]
% Analogously, we denote by $\{ \varphi_{z'}: z' \in \Nin(\T_{\Omega}) \}$ the the canonical basis of the discrete space $\U(\T_{\Omega}) = \tr \V(\T_{\Y})$.

%%%%%%%%%%%%%%%%%%%%%%%%%%%%%%%%%%%%%%%%%%%%%%%%%%%%%%%%%%%%%%%%%%%%%%%%%%%%%%%%%%%%%%%%%%%%%%%%%%%%%%%%%%%%%%%%%%%%%%%%%%%%%%%%%%%%%%%%%%%%%%%%%%%%%%%%%%%%%%%%%%%%%%%%%%%
\subsection{Local weighted Sobolev spaces}
\label{subsec:local_spaces}
%%%%%%%%%%%%%%%%%%%%%%%%%%%%%%%%%%%%%%%%%%%%%%%%%%%%%%%%%%%%%%%%%%%%%%%%%%%%%%%%%%%%%%%%%%%%%%%%%%%%%%%%%%%%%%%%%%%%%%%%%%%%%%%%%%%%%%%%%%%%%%%%%%%%%%%%%%%%%%%%%%%%%%%%%%%

To define the a posteriori error estimator proposed in this work, we need to introduce some local weighted Sobolev spaces.

\begin{definition}[local spaces]
Given $z' \in \N(\T_\Omega)$, we define
\begin{equation}
\label{eq:local_space}
\W(\C_{z'}) = \left \{ w \in H^1(y^{\alpha},\C_{z'} ): w = 0 \textrm{ on } \partial \C_{z'} 
\setminus \Omega \times \{ 0\} \right \},
\end{equation}
where $\C_{z'}$ denotes the cylindrical star around $z'$  defined in \eqref{def:cylindrical_star}.
\end{definition}

Since $y^\alpha$ belongs to the class $A_2(\R^{n+1})$ \cite{Javier,Muckenhoupt}, the space $\W(\C_{z'})$ is Hilbert. In addition, we have the following weighted Poincar\'e-type inequality \cite[Proposition 5.8]{CNOS2}: If $w \in \W(\C_{z'})$, then
\begin{equation}
\label{eq:Poincare}
  \| w \|_{L^2(y^{\alpha},\C_{z'})} \lesssim \Y \|  \nabla w \|_{L^2(y^{\alpha},\C_{z'})},
\end{equation}
where $\Y$ denotes the truncation parameter introduced in section \ref{subsec:truncated}. We also have the following trace inequality that follows from \cite[Proposition 2.1]{CDDS:11}: If $w \in \W(\C_{z'})$, then 
\begin{equation}
 \label{Trace_estimate_local}
 \| \tr w \|_{L^2(S_{z'})} \leq C_{\tr} \| \nabla w \|_{L^2(y^{\alpha},\C_{z'})}.
\end{equation}
We notice that the same arguments of \cite[Section 2.3]{CNOS2} yield $C_{\tr} \leq d_s^{-\frac{1}{2}}$.

%%%%%%%%%%%%%%%%%%%%%%%%%%%%%%%%%%%%%%%%%%%%%%%%%%%%%%%%%%%%%%%%%%%%%%%%%%%%%%%%%%%%%%%%%%%%%%%%%%%%%%%%%%%%%%%%%%%%%%%%%%%%%%%%%%%%%%%%%%%%%%%%%%%%%%%%%%%%%%%%%%%%%%%%%%%
\subsection{An ideal a posteriori error estimator}
\label{subsec:ideal_a_posteriori}
%%%%%%%%%%%%%%%%%%%%%%%%%%%%%%%%%%%%%%%%%%%%%%%%%%%%%%%%%%%%%%%%%%%%%%%%%%%%%%%%%%%%%%%%%%%%%%%%%%%%%%%%%%%%%%%%%%%%%%%%%%%%%%%%%%%%%%%%%%%%%%%%%%%%%%%%%%%%%%%%%%%%%%%%%%%

On the basis of the notation introduced in subsections \ref{subsec:preliminaries} and \ref{subsec:local_spaces}, we propose and analyze an ideal a posteriori error estimator for the fractional optimal control problem \eqref{def:minJ}--\eqref{eq:control_constraints}. The proposed error indicator is ideal because it is not computable: it is based on the resolution of local problems on infinite dimensional spaces. However, it provides the intuition required to define a discrete and computable error indicator, as is explained in section \ref{subsec:computable_a_posteriori}. The construction of this ideal indicator allows for the anisotropic meshes $\T_{\Y}$ defined in section \ref{sec:apriori_control} and the nonuniformly coefficient $y^{\alpha}$ of problem \eqref{eq:alpha_harm_weak}. We prove that is equivalent to the error without oscillation terms. 

The ideal error indicator is defined as the sum of three contributions:
\begin{equation}
 \label{eq:defofEocp}
 \E_{\textrm{ocp}}(\bar{V},\bar{P},\bar{Z}; \T_{\Y}) = \E_{V}(\bar{V},\bar{Z}; \N(\T_{\Omega})) + \E_{P}(\bar{P},\bar{V}; \N(\T_{\Omega})) + \E_{Z}(\bar{Z},\bar{P}; \T_{\Omega}),
\end{equation}
where $\T_{\Y} \in \Tr$ corresponds to the anisotropic mesh constructed in subsection \ref{subsec:fully} and $\bar{V}$, $\bar{P}$ and $\bar{Z}$ denote the optimal variables solving the fully discrete optimal control problem described in subsection \ref{subsec:fully}. We now proceed to describe each contribution in \eqref{eq:defofEocp} separately.
To do this, we introduce, for $w,\psi \in \W(\C_{z'})$, the bilinear form
\begin{equation}
 \label{eq:a_local}
 a_{z'}(w,\psi) = \frac{1}{d_s} \int_{\C_{z'}} y^{\alpha} \nabla w  \nabla \psi.
\end{equation}
Then, the first contribution in \eqref{eq:defofEocp} is defined on the basis of the indicator developed in \cite[section 5.3]{CNOS2}. We define $\zeta_{z'} \in \W(\C_{z'})$ as the solution to 
\begin{equation}
\label{eq:ideal_local_problemV}
a_{z'}(\zeta_{z'},\psi) = \langle \bar{Z}, \tr \psi  \rangle  -   a_{z'}(\bar{V}, \psi) \quad \forall \psi \in \W(\C_{z'}),
\end{equation}
where we recall that the space $\W(\C_{z'})$ is defined in \eqref{eq:local_space}. With this definition at hand, we then define the local error estimator
\begin{equation}
\label{eq:defofEV} 
\E_{V}(\bar{V},\bar{Z}; \C_{z'}) := \| \nabla\zeta_{z'} \|_{L^2(y^{\alpha},\C_{z'})}
\end{equation}
and the global error estimator
$
\E_{V}(\bar{V},\bar{Z}; \N(\T_{\Omega})) := \left( \sum_{z' \in \N(\T_{\Omega})} \E_{V}^2(\bar{V},\bar{Z}; \C_{z'}) \right)^{\frac{1}{2}}.
$

We now describe the second contribution in \eqref{eq:defofEocp}. To accomplish this task, we define $\chi_{z'} \in \W(\C_{z'})$ as the solution to the local problem
\begin{equation}
\label{eq:ideal_local_problemP}
a_{z'}(\chi_{z'},\psi) = \langle \tr \bar{V} - \usf_d, \tr \psi  \rangle  - a_{z'}(\bar{P}, \psi) \quad \forall \psi \in \W(\C_{z'}).
\end{equation}
We then define the local error indicator 
\begin{equation}
\label{eq:defofEP} 
\E_{P}(\bar{P},\bar{V}; \C_{z'}) := \| \nabla\chi_{z'} \|_{L^2(y^{\alpha},\C_{z'})}
\end{equation}
and the global error indicator $\E_{P}(\bar{P},\bar{V}; \N( \T_{\Omega} ) ) := \left( \sum_{z' \in \N( \T_{\Omega} )} \E_{P}^2(\bar{P},\bar{V}; \C_{z'}) \right)^2$.

Finally, we define a global error estimator for the optimal control as follows:
\begin{equation}
\label{eq:defofEZglobal}
  \E_{Z}(\bar{Z},\bar{P}; \T_{\Omega}) := \left( \sum_{K \in \T_{\Omega}}  \E^2_{Z}(\bar{Z},\bar{P}; K) \right)^{1/2},
\end{equation}
with the local error indicators
\begin{equation}
\label{eq:defofEZ}
\E_{Z}(\bar{Z},\bar{P}; K) := \| \bar{Z} - \Pi (-\tfrac{1}{\mu} \tr \bar{P}) \|_{L^2(K)}.
\end{equation}
In \eqref{eq:defofEZ}, $\Pi: L^2(\Omega) \rightarrow \Zad$ denotes the nonlinear projection operator defined by 
\begin{equation} 
\label{def:Pi}
  \Pi(x') = \min\{\bsf, \max\{\asf, x'\}\},
\end{equation}
where $\asf$ and $\bsf$ denote the control bounds defining the set $\Zad$ in \eqref{def:Zad}.

To invoke the results of \cite[section 5.3]{CNOS2}, we introduce an implementable geometric condition that will allow us to consider graded meshes in $\Omega$ while preserving the anisotropy in the extended direction $y$ that is necessary to retain optimal orders of approximation. The flexibility of having graded meshes in $\Omega$ is essential for compensating some possible singularities in the $x'$--variables. We thus assume the following condition over the family of triangulations $\Tr$: there exists a positive constant $C_{\Tr}$ such that, for every mesh $\T_{\Y} \in \Tr$, we have that
\begin{equation}
\label{condition}
 h_{\Y} \leq C_{\Tr} h_{z'},
\end{equation}
for all interior nodes $z'$ of $\T_{\Omega}$. Here, $h_{\Y}$ denotes the largest size in the $y$--direction. We remark that this condition is fully implementable.

We now derive an estimate of the energy error in terms of the total
error estimator $\E_{\textrm{ocp}}$ defined in \eqref{eq:defofEocp} (reliability).

\begin{theorem}[global upper bound]
Let $(\bar{v}, \bar{p}, \orsf) \in \HL(y^{\alpha},\C_{\Y}) \times \HL(y^{\alpha},\C_{\Y}) \times \Zad$ be the solution to the optimality system associated with the truncated optimal control problem defined in subsection \ref{subsec:truncated} and $(\bar{V},\bar{P},\bar{Z}) \in \V(\T_{\Y}) \times \V(\T_{\Y}) \times \mathbb{Z}_{ad}(\T_{\Omega})$ its numerical approximation defined in subsection \ref{subsec:fully}. If \eqref{condition} holds, then
\begin{multline}
\label{eq:reliability}
 \| \nabla( \bar{v} -\bar{V}) \|_{L^2(y^{\alpha},\C_{\Y})}  +  \| \nabla( \bar{p} -\bar{P}) \|_{L^2(y^{\alpha},\C_{\Y})} + \| \bar{\rsf} -\bar{Z} \|_{L^2(\Omega)} \\ \lesssim \E_{V}(\bar{V},\bar{Z}; \N(\T_{\Omega})) + \E_{P}(\bar{P},\bar{V}; \N(\T_{\Omega})) +  \E_{Z}(\bar{Z},\bar{P}; \T_{\Omega}),
\end{multline}
where the hidden constant is independent of the continuous and discrete optimal variables, and the size of the elements in the meshes $\T_{\Omega}$ and $\T_{\Y}$.
\label{th:reliability}
\end{theorem}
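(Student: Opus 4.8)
The plan is to bound the three error contributions separately, reducing the state and adjoint errors to the control error plus the estimators $\E_V$ and $\E_P$, and then to close the argument with a stand-alone estimate for the control error $\|\orsf-\bar{Z}\|_{L^2(\Omega)}$. The engine for the state and adjoint pieces is the a posteriori reliability of the indicators $\E_V$ and $\E_P$ borrowed from \cite[section 5.3]{CNOS2}; this is precisely where the geometric condition \eqref{condition} enters, since it is the hypothesis under which the local-problem estimators on cylindrical stars control the Galerkin error on the anisotropic meshes $\T_{\Y}$. To exploit this I would introduce three auxiliary functions in $\HL(y^{\alpha},\C_{\Y})$: let $\tilde{v}$ solve $a_\Y(\tilde{v},\phi)=\langle\bar{Z},\tr\phi\rangle$, let $\tilde{p}$ solve $a_\Y(\tilde{p},\phi)=(\tr\bar{V}-\usf_d,\tr\phi)_{L^2(\Omega)}$, and let $\hat{p}$ solve $a_\Y(\hat{p},\phi)=(\tr\tilde{v}-\usf_d,\tr\phi)_{L^2(\Omega)}$. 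By construction $\bar{V}$ and $\bar{P}$ are exactly the Galerkin approximations of $\tilde{v}$ and $\tilde{p}$, so \cite{CNOS2} yields $\|\nabla(\tilde{v}-\bar{V})\|_{L^2(y^{\alpha},\C_{\Y})}\lesssim\E_V$ and $\|\nabla(\tilde{p}-\bar{P})\|_{L^2(y^{\alpha},\C_{\Y})}\lesssim\E_P$.

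First I would estimate the control error, which I expect to be the crux. Writing the continuous optimality condition \eqref{eq:VI} in projection form, $\orsf=\Pi(-\tfrac{1}{\mu}\tr\bar{p})$, and using the strong (with modulus $\mu$) monotonicity of the reduced optimality mapping, I would start from $\mu\|\orsf-\bar{Z}\|_{L^2(\Omega)}^2\le(\mu\bar{Z}+\tr\hat{p},\bar{Z}-\orsf)_{L^2(\Omega)}$, where $\tr\hat{p}+\mu\bar{Z}$ is the continuous reduced gradient evaluated at $\bar{Z}$. The decisive point is that $\hat{p}-\bar{P}=(\hat{p}-\tilde{p})+(\tilde{p}-\bar{P})$, and $\hat{p}-\tilde{p}$ solves \eqref{eq:p_truncated} with datum $\tr(\tilde{v}-\bar{V})$, i.e.\ the state discretization error; hence, by the trace estimate \eqref{Trace_estimate_local} and the stability of \eqref{eq:p_truncated}, the quantity $\|\tr(\hat{p}-\bar{P})\|_{L^2(\Omega)}\lesssim\E_V+\E_P$ is controlled by the estimators alone, with no circular dependence on the continuous adjoint $\bar{p}$. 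It then remains to treat the genuinely discrete term $(\mu\bar{Z}+\tr\bar{P},\bar{Z}-\orsf)_{L^2(\Omega)}$, which I would split around $\Pi(-\tfrac{1}{\mu}\tr\bar{P})$: the term pairing $\mu\Pi(-\tfrac{1}{\mu}\tr\bar{P})+\tr\bar{P}$ with $\Pi(-\tfrac{1}{\mu}\tr\bar{P})-\orsf$ is nonpositive by the variational characterization of $\Pi$ (recall $\orsf\in\Zad$), the diagonal contribution reproduces $\mu\E_Z^2$, and the remaining cross terms are bounded through the definition \eqref{eq:defofEZ} of $\E_Z$. A Young inequality then absorbs the factors of $\|\orsf-\bar{Z}\|_{L^2(\Omega)}$ and delivers $\|\orsf-\bar{Z}\|_{L^2(\Omega)}\lesssim\E_V+\E_P+\E_Z$.

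With the control error in hand, the state and adjoint errors follow by stability and the triangle inequality. Since $\bar{v}-\tilde{v}$ solves \eqref{eq:alpha_harm_truncated} with datum $\orsf-\bar{Z}$, testing with $\bar{v}-\tilde{v}$ and using \eqref{Trace_estimate_local} gives $\|\nabla(\bar{v}-\tilde{v})\|_{L^2(y^{\alpha},\C_{\Y})}\lesssim\|\orsf-\bar{Z}\|_{L^2(\Omega)}$, and a triangle inequality with the \cite{CNOS2} bound yields $\|\nabla(\bar{v}-\bar{V})\|_{L^2(y^{\alpha},\C_{\Y})}\lesssim\E_V+\E_P+\E_Z$. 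Likewise $\bar{p}-\tilde{p}$ solves \eqref{eq:p_truncated} with datum $\tr(\bar{v}-\bar{V})$, so $\|\nabla(\bar{p}-\tilde{p})\|_{L^2(y^{\alpha},\C_{\Y})}\lesssim\|\nabla(\bar{v}-\bar{V})\|_{L^2(y^{\alpha},\C_{\Y})}$, and a second triangle inequality closes the adjoint estimate. Summing the three bounds gives \eqref{eq:reliability}; throughout, the clean, discretization-independent constants rely on $C_{\tr}\le d_s^{-1/2}$, as recorded after \eqref{Trace_estimate_local}.

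The main obstacle is the control estimate, and specifically the reconciliation of the continuous variational inequality \eqref{eq:VI}, posed over $\Zad$, with its discrete counterpart \eqref{eq:VI_discrete}, posed over the strictly smaller set $\mathbb{Z}_{ad}(\T_{\Omega})$: the test function $\orsf$ is not admissible for \eqref{eq:VI_discrete}, so the variational-inequality residual must be measured through $\Pi$ and $\E_Z$ rather than discarded. Organizing the argument so that the coupling to the state and adjoint errors enters \emph{only} through the estimators $\E_V$ and $\E_P$ (via the auxiliary adjoint $\hat{p}$), rather than through the continuous adjoint $\bar{p}$, is what prevents the three inequalities from forming a circular, non-closing system. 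Carrying the absorption through with a constant that is independent of the discretization is the delicate part, and is the natural place for the structural hypotheses behind the reliability claim to be used.
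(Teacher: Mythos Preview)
Your overall architecture is very close to the paper's: the same auxiliary problems (your $\tilde v$, $\tilde p$, $\hat p$ are the paper's $v^*$, $q$, and an adjoint analogue thereof), the same appeal to \cite[Proposition~5.14]{CNOS2} for the Galerkin pieces (this is indeed where \eqref{condition} enters), and the same stability/triangle-inequality derivation of the state and adjoint bounds once the control error is in hand. Those parts are fine.

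The gap is in the control step. Starting from
\[
\mu\|\orsf-\bar Z\|_{L^2(\Omega)}^2 \le (\mu\bar Z+\tr\hat p,\bar Z-\orsf)_{L^2(\Omega)}
= (\tr(\hat p-\bar P),\bar Z-\orsf)_{L^2(\Omega)} + (\mu\bar Z+\tr\bar P,\bar Z-\orsf)_{L^2(\Omega)},
\]
the first term is indeed $\lesssim(\E_V+\E_P)\|\orsf-\bar Z\|$, but the second one does not close under your proposed splitting. Writing $\tilde\rsf:=\Pi(-\tfrac1\mu\tr\bar P)$ and expanding, one is left with the piece $(\mu\tilde\rsf+\tr\bar P,\bar Z-\tilde\rsf)_{L^2(\Omega)}$. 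By the very variational characterization of $\Pi$ this term is \emph{nonnegative} (take $\rsf=\bar Z$ in $(\mu\tilde\rsf+\tr\bar P,\rsf-\tilde\rsf)\ge0$), so it cannot be discarded, and it is not dominated by $\E_Z$ with a constant independent of the discrete variables: on the active set $\mu\tilde\rsf+\tr\bar P$ equals $\mu\asf+\tr\bar P$ or $\mu\bsf+\tr\bar P$, whose $L^2$-size is controlled only by $\|\tr\bar P\|_{L^2(\Omega)}$, not by any of $\E_V,\E_P,\E_Z$. (A simple scaling---let $\tr\bar P$ take values $\pm M$ on two halves of an element with $\asf=0$, $\bsf=1$---makes this ratio as large as one wishes.) The discrete inequality \eqref{eq:VI_discrete} does not help here because $\tilde\rsf\notin\mathbb{Z}_{ad}(\T_\Omega)$.

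The paper sidesteps this by bounding $\|\orsf-\tilde\rsf\|$ rather than $\|\orsf-\bar Z\|$: since \emph{both} $\orsf$ and $\tilde\rsf$ satisfy variational inequalities over the full set $\Zad$ (with data $\tr\bar p$ and $\tr\bar P$ respectively), testing each with the other point and adding gives immediately
\[
\mu\|\orsf-\tilde\rsf\|_{L^2(\Omega)}^2 \le (\tr(\bar p-\bar P),\tilde\rsf-\orsf)_{L^2(\Omega)},
\]
with no residual ``active--set'' term; the right--hand side is then handled by a chain of auxiliary problems (the paper's $q$, $w$, $\tilde v$, $v^*$) and a sign trick $\mathrm I_1\le 0$ analogous to the monotonicity you already use. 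A final triangle inequality $\|\orsf-\bar Z\|\le\|\orsf-\tilde\rsf\|+\E_Z$ recovers the control estimate. In short, the fix is organizational: run the monotonicity argument between $\orsf$ and $\tilde\rsf$, not between $\orsf$ and $\bar Z$.
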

\begin{proof}
The proof involves six steps.
 
Step 1. With the definition \eqref{eq:defofEZ} of the local error indicator $\E_{Z}$ in mind, we define the auxiliary control $\tilde{\rsf} = \Pi (-\frac{1}{\mu} \tr \bar{P})$ and notice that it verifies 
\begin{equation}
\label{eq:VI_rtilde}
 ( \tr \bar{P} + \mu \tilde{\rsf} , \rsf - \tilde{\rsf} )_{L^2(\Omega)} \geq 0 \quad \forall \rsf \in \Zad.
\end{equation}
Then, an application of the triangle inequality yields
\begin{equation}
\label{eq:r-Z-rtilde}
 \| \orsf - \bar{Z} \|_{L^2(\Omega)} \leq  \| \orsf - \tilde{\rsf} \|_{L^2(\Omega)}  +  \| \tilde{\rsf}- \bar{Z} \|_{L^2(\Omega)}
\end{equation}
We notice that the second term on the right hand side of the previous inequality corresponds to the definition of the global indicator \eqref{eq:defofEZglobal}. Thus, it suffices to bound the first term, \ie $\| \orsf - \tilde{\rsf} \|_{L^2(\Omega)}$.

Step 2. Set $\rsf = \tilde \rsf$ in \eqref{eq:VI} and $\rsf = \orsf$ in \eqref{eq:VI_rtilde}. Adding the obtained inequalities we arrive at
\begin{equation}
\label{eq:Step_2}
\mu \| \orsf - \tilde{\rsf} \|^2_{L^2(\Omega)} \leq (\tr(\bar{p} - \bar{P}), \tilde{\rsf} - \orsf )_{L^2(\Omega)},
\end{equation}
where $\bar{p}$ and $\bar{P}$ solve \eqref{eq:p_truncated} and \eqref{eq:P_discrete}, respectively. To control the right hand side of this expression, we introduce the auxiliary adjoint state $q$ that uniquely solves
\begin{equation}
\label{eq:q_truncated}
q \in \HL(y^{\alpha},\C_{\Y}):\quad a_\Y(\phi,q) = (  \tr \bar{V} - \usfd, \tr \phi )_{L^2(\Omega)} \quad \forall \phi \in \HL(y^{\alpha},\C_{\Y}).
\end{equation}
By writing $\bar{p} - \bar{P} = (\bar{p}-q) + (q - \bar{P})$, the estimate \eqref{eq:Step_2} immediately yields
\begin{equation}
\label{eq:muleq}
\mu \| \orsf - \tilde{\rsf} \|^2_{L^2(\Omega)} \leq  (\tr(\bar{p} - q), \tilde{\rsf} - \orsf )_{L^2(\Omega)} + (\tr(q- \bar{P}), \tilde{\rsf} - \orsf )_{L^2(\Omega)}.
\end{equation}
We conclude this step by noticing that, by construction, problem \eqref{eq:P_discrete} corresponds to the Galerkin approximation of \eqref{eq:q_truncated}. Then, \cite[Proposition 5.14]{CNOS2} yields
\begin{align}
\nonumber
  |\textrm{II}| & := |(\tr(q- \bar{P}), \tilde{\rsf} - \orsf )_{L^2(\Omega)}| \lesssim \| \nabla( q- \bar{P} ) \|_{L^2(y^{\alpha},\C_{\Y})} \| \tilde{\rsf} - \orsf \|_{L^2(\Omega)} \\ & \lesssim \E_{P}(\bar{P},\bar{V};\N(\T_{\Omega}))  \| \tilde{\rsf} - \orsf \|_{L^2(\Omega)} \leq \frac{\mu}{4}  \| \tilde{\rsf} - \orsf \|^2_{L^2(\Omega)} + C \E_{P}(\bar{P},\bar{V};\N(\T_{\Omega})),
\label{eq:II}
\end{align}
where in the first inequality we used \eqref{Trace_estimate}; $C$ denotes a positive constant.

Step 3. The goal of this step is to bound the term $\mathrm{I}:= (\tr(\bar{p} - q), \tilde{\rsf} - \orsf )_{L^2(\Omega)}$. To accomplish this task, we introduce another auxiliary adjoint state
\begin{equation}
 \label{eq:w_truncated}
w \in \HL(y^{\alpha},\C_{\Y}): \quad a_{\Y} (\phi,w) = (  \tr \tilde{v} - \usfd, \tr \phi )_{L^2(\Omega)} \quad \forall \phi \in \HL(y^{\alpha},\C_{\Y}),
\end{equation}
where $\tilde v$ is defined as the unique solution to
\begin{equation}
\label{def:tildev}
\tilde{v} \in \HL(y^{\alpha},\C_{\Y}): \quad a_{\Y} (\tilde{v},\phi) = ( \tilde{\rsf}, \tr \phi )_{L^2(\Omega)} \quad \forall \phi \in \HL(y^{\alpha},\C_{\Y}),
\end{equation}
and $\tilde{\rsf} = \Pi (-\frac{1}{\mu} \tr \bar{P})$. We then write $\bar{p} - q = ( \bar{p} - w ) + (w - q )$ and bound each contribution to the term $\mathrm{I}$ separately. To do this, we observe that $\bar{v} - \tilde{v}$ solves the problem $a_{\Y} (\bar{v} - \tilde{v},\phi) = (\orsf- \tilde{\rsf}, \tr \phi )_{L^2(\Omega)}$ for all $\phi \in \HL(y^{\alpha},\C_{\Y})$. On the other hand, for all these test functions, $\bar{p} - w$ solves $a_{\Y} (\phi,\bar{p} - w) = (  \tr (\bar{v}-\tilde{v}), \tr \phi )_{L^2(\Omega)}$. Combining these two problems, we arrive at
\begin{equation}
\label{eq:I_1}
 \mathrm{I}_1:= (\tr(\bar{p} - w), \tilde{\rsf} - \orsf )_{L^2(\Omega)} = - a_{\Y}  (\bar{v} - \tilde{v},\bar p -w) = - \| \tr (\bar{v} - \tilde{v}) \|^2_{L^2(\Omega)} \leq 0.
\end{equation}

We now estimate the term $\mathrm{I}_2:= (\tr(w - q), \tilde{\rsf} - \orsf )_{L^2(\Omega)}$, where $w$ and $q$ solve problems \eqref{eq:w_truncated} and \eqref{eq:q_truncated}, respectively. We observe that the difference $w - q$ solves $a_{\Y} (\phi,w-q) = (  \tr (\tilde{v}-\bar{V}), \tr \phi )_{L^2(\Omega)}$ for all $\phi \in \HL(y^{\alpha},\C_{\Y})$. Thus, the trace estimate \eqref{Trace_estimate} and the stability of problem \eqref{eq:q_truncated} yield
\begin{equation}
\label{eq:aux1}
 |\mathrm{I}_2| \lesssim \| \nabla ( w-q )  \|_{L^2(y^{\alpha},\C_{\Y})} \|\tilde{\rsf} - \orsf  \|_{L^2(\Omega)} \lesssim \|  \tr (\tilde{v}-\bar{V})  \|_{L^2(\Omega)} \|\tilde{\rsf} - \orsf  \|_{L^2(\Omega)}.
\end{equation}
It suffices to bound the term $\|  \tr (\tilde{v}-\bar{V})  \|_{L^2(\Omega)}$. To accomplish this task, we invoke the triangle inequality and obtain the estimate $\|  \tr (\tilde{v}-\bar{V})  \|_{L^2(\Omega)} \leq \|  \tr (\tilde{v}-v^*)  \|_{L^2(\Omega)}+ \|  \tr (v^*-\bar{V})  \|_{L^2(\Omega)}$, where $v^*$ denotes the unique solution to the following problem:
\begin{equation}
 \label{eq:v_star}
v^* \in \HL(y^{\alpha},\C_{\Y}): \quad a_{\Y} (v^*,\phi) = (  \bar{Z}, \tr \phi )_{L^2(\Omega)} \quad \forall \phi \in \HL(y^{\alpha},\C_{\Y}).
\end{equation}
Now, we invoke \eqref{Trace_estimate} and the stability of \eqref{eq:v_star} to derive that $\|  \tr (\tilde{v}-v^*)  \|_{L^2(\Omega)} \lesssim \| \tilde \rsf - \bar{Z}\|_{L^2(\Omega)}$. This, in view of the definition of $\E_{Z}$, given by \eqref{eq:defofEZglobal}--\eqref{eq:defofEZ}, yields  
\begin{equation}
\label{eq:aux2}
\|  \tr (\tilde{v}-v^*)  \|_{L^2(\Omega)} \lesssim \E_{Z}(\bar{Z},\bar{P};\T_{\Omega}). 
\end{equation}
To control the remainder term, we observe that problem \eqref{def:a_discrete} corresponds to the Galerkin approximation of \eqref{eq:v_star}. Consequently, \eqref{Trace_estimate} and \cite[Proposition 5.14]{CNOS2} yield 
\begin{equation}
\label{eq:aux3}
\|  \tr (v^*-\bar{V})  \|_{L^2(\Omega)} \lesssim \| \nabla( v^*-\bar{V} ) \|_{L^2(y^{\alpha},\C_{\Y})} \lesssim \E_{V}(\bar{V},\bar{Z};\N(\T_{\Omega})). 
\end{equation}
In view of \eqref{eq:aux1}, the collection of the estimates \eqref{eq:aux2} and \eqref{eq:aux3} allows us to obtain 
\[
 |\mathrm{I}_2| \leq \frac{\mu}{4}\|\tilde{\rsf} - \orsf  \|^2_{L^2(\Omega)} + C \left( \E^2_{Z}(\bar{Z},\bar{P};\T_{\Omega}) +  \E^2_{V}(\bar{V},\bar{Z};\N(\T_{\Omega})) \right),
\]
where $C$ denotes a positive constant. Since \eqref{eq:I_1} tells us that $\mathrm{I}_1 \leq 0$, we obtain a similar estimate for the term $\mathrm{I} = \mathrm{I}_1 + \mathrm{I}_2$. This estimate implies, on the basis of \eqref{eq:muleq} and \eqref{eq:II}, the following bound
\[
\| \orsf - \tilde{\rsf} \|^2_{L^2(\Omega)} \lesssim \E^2_{V}(\bar{V},\bar{Z};\N(\T_{\Omega})) + \E^2_{P}(\bar{P},\bar{V};\T_{\Omega})+ \E^2_{Z}(\bar{Z},\bar{P};\T_{\Omega}),
\]
which, invoking \eqref{eq:r-Z-rtilde}, provides an estimate for the error in control approximation:
\begin{equation}
 \label{eq:control_error}
 \| \orsf - \bar{Z} \|_{L^2(\Omega)} \lesssim \E_{\textrm{ocp}}(\bar{V},\bar{P},\bar{Z}; \T_{\Y}).
\end{equation}

Step 4. The goal of this step in to bound the error $\| \nabla(\bar{v}-\bar{V}) \|_{L^2(y^{\alpha},\C_{\Y})}$ in terms of the ideal error indicator \eqref{eq:defofEocp}. We employ similar arguments to the ones developed in step 2. We write $\bar{v}-\bar{V} = (\bar{v}- v^*) + (v^*- \bar{V})$, where $v^*$ is defined in \eqref{eq:v_star}. The stability of problem \eqref{eq:v_star} and the estimate \eqref{eq:control_error} immediately provide the bound
\[
 \| \nabla(\bar{v}- v^*) \|_{L^2(y^{\alpha},\C_{\Y})} \lesssim \| \orsf - \bar{Z} \|_{L^2(\Omega)} \lesssim \E_{\textrm{ocp}}(\bar{V},\bar{P},\bar{Z}; \T_{\Y}).
\]
which, combined with \eqref{eq:aux3}, allows us to derive
\begin{equation}
\label{eq:state_error}
 \| \nabla(\bar{v}-\bar{V}) \|_{L^2(y^{\alpha},\C_{\Y})} \lesssim \E_{\textrm{ocp}}(\bar{V},\bar{P},\bar{Z}; \T_{\Y}).
 \end{equation}
 
 Step 5. We bound the term $\| \nabla(\bar{p}-\bar{P}) \|_{L^2(y^{\alpha},\C_{\Y})}$. To accomplish this task, we invoke the triangle inequality and write
 \[
  \| \nabla(\bar{p}-\bar{P}) \|_{L^2(y^{\alpha},\C_{\Y})} \leq \| \nabla(\bar{p}-q) \|_{L^2(y^{\alpha},\C_{\Y})} + \| \nabla(q-\bar{P}) \|_{L^2(y^{\alpha},\C_{\Y})}.
 \]
where $q$ is defined as the solution to problem \eqref{eq:q_truncated}. Applying the stability of problem \eqref{eq:q_truncated}, the trace estimate \eqref{Trace_estimate}, and \eqref{eq:state_error}, we arrive at
\[
  \| \nabla(\bar{p}-q) \|_{L^2(y^{\alpha},\C_{\Y})} \lesssim \| \tr(\bar v - \bar V)\|_{L^2(y^{\alpha},\C_{\Y})} \lesssim \E_{\textrm{ocp}}(\bar{V},\bar{P},\bar{Z}; \T_{\Y}).
\]
On the other hand, since $\bar P$, solution to \eqref{eq:P_discrete}, corresponds to the Galerkin approximation of $q$, solution to \eqref{eq:q_truncated}, we invoke
\cite[Proposition 5.14]{CNOS2} to derive
\[
 \| \nabla(q-\bar{P})  \|_{L^2(y^{\alpha},\C_{\Y})} \lesssim \E_{P}(\bar P, \bar Z; \N(\T_{\Omega})).
\]
Collecting the derived estimates, we obtain that
\begin{equation}
\label{eq:adjoint_error}
 \| \nabla(\bar{p}-\bar P) \|_{L^2(y^{\alpha},\C_{\Y})} \lesssim \E_{\textrm{ocp}}(\bar{V},\bar{P},\bar{Z}; \T_{\Y}). 
\end{equation}

Step 6. Finally, the desired estimate \eqref{eq:reliability} follows from a simple collection of the estimates \eqref{eq:control_error}, \eqref{eq:state_error} and \eqref{eq:adjoint_error}.
\end{proof}

We now derive a local lower bound that measures the quality of $\E_{\textrm{ocp}}$ (efficiency). To achieve this, we define 
\begin{equation}
\label{eq:C}
 C(d_s,\mu) = \max \{  2d_s^{-1}, d_s^{-\frac{1}{2}}(\mu^{-1} + d_s^{-\frac{1}{2}}),1 + d_s^{-\frac{1}{2}} \}.
\end{equation}

\begin{theorem}[local lower bound]
Let $(\bar{v}, \bar{p}, \orsf) \in \HL(y^{\alpha},\C_{\Y}) \times \HL(y^{\alpha},\C_{\Y}) \times \Zad$ be the solution to the optimality system associated with the truncated optimal control problem defined in subsection \ref{subsec:truncated} and $(\bar{V},\bar{P},\bar{Z}) \in \V(\T_{\Y}) \times \V(\T_{\Y}) \times \mathbb{Z}_{ad}(\T_{\Omega})$ its numerical approximation defined in subsection \ref{subsec:fully}. Then,
\begin{multline}
\label{eq:efficiency}
 \E_{V}(\bar{V},\bar{Z}; \C_{z'} ) + \E_{P}(\bar{P},\bar{V}; \C_{z'} ) +  \E_{Z}(\bar{Z},\bar{P}; S_{z'}) \\ 
 \leq C(s,\mu) \left( \| \nabla( \bar{v} -\bar{V}) \|_{L^2(y^{\alpha},\C_{z'} )}  +  \| \nabla( \bar{p} -\bar{P}) \|_{L^2(y^{\alpha},\C_{z'})} + \| \bar{\rsf} -\bar{Z} \|_{L^2(S_{z'})} \right),
\end{multline}
where $C(d_s,\mu)$ depends only on $d_s$ and the parameter $\mu$ and is defined in \eqref{eq:C}.
\label{th:global_efficiency}
\end{theorem}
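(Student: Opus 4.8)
The plan is to prove the three local bounds separately and then add them. For the estimators $\E_V$ and $\E_P$, defined through the local problems \eqref{eq:ideal_local_problemV} and \eqref{eq:ideal_local_problemP}, the guiding idea is to test each local problem with its own solution and to exploit that the continuous truncated variables $\bar{v}$ and $\bar{p}$ satisfy the state and adjoint equations \emph{locally} on every cylindrical star. Indeed, any $\psi \in \W(\C_{z'})$ extends by zero to an element of $\HL(y^{\alpha},\C_{\Y})$, so \eqref{eq:alpha_harm_truncated} and \eqref{eq:p_truncated} restrict to $a_{z'}(\bar{v},\psi)=(\orsf,\tr\psi)_{L^2(S_{z'})}$ and $a_{z'}(\bar{p},\psi)=(\tr\bar{v}-\usf_d,\tr\psi)_{L^2(S_{z'})}$ for all such $\psi$, using that $\nabla\psi$ and $\tr\psi$ are supported in $\C_{z'}$ and $S_{z'}$, respectively.

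First I would treat $\E_V$. Setting $\psi=\zeta_{z'}$ in \eqref{eq:ideal_local_problemV} gives $d_s^{-1}\|\nabla\zeta_{z'}\|^2_{L^2(y^{\alpha},\C_{z'})}=(\bar{Z},\tr\zeta_{z'})_{L^2(S_{z'})}-a_{z'}(\bar{V},\zeta_{z'})$. I then insert $\pm\,\orsf$ into the first term and use the local state equation to replace $(\orsf,\tr\zeta_{z'})_{L^2(S_{z'})}$ by $a_{z'}(\bar{v},\zeta_{z'})$, which yields $d_s^{-1}\|\nabla\zeta_{z'}\|^2_{L^2(y^{\alpha},\C_{z'})}=(\bar{Z}-\orsf,\tr\zeta_{z'})_{L^2(S_{z'})}+a_{z'}(\bar{v}-\bar{V},\zeta_{z'})$. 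A Cauchy--Schwarz inequality applied to the bilinear form, together with the local trace estimate \eqref{Trace_estimate_local} (for which $C_{\tr}\le d_s^{-1/2}$) applied to the $L^2(S_{z'})$ term, then controls $\E_V(\bar{V},\bar{Z};\C_{z'})=\|\nabla\zeta_{z'}\|_{L^2(y^{\alpha},\C_{z'})}$ by multiples of $\|\nabla(\bar{v}-\bar{V})\|_{L^2(y^{\alpha},\C_{z'})}$ and $\|\orsf-\bar{Z}\|_{L^2(S_{z'})}$. The indicator $\E_P$ is handled identically: testing \eqref{eq:ideal_local_problemP} with $\chi_{z'}$, inserting $\pm\,\tr\bar{v}$ and invoking the local adjoint equation produces $d_s^{-1}\|\nabla\chi_{z'}\|^2_{L^2(y^{\alpha},\C_{z'})}=(\tr(\bar{V}-\bar{v}),\tr\chi_{z'})_{L^2(S_{z'})}+a_{z'}(\bar{p}-\bar{P},\chi_{z'})$, whence two uses of \eqref{Trace_estimate_local} on the first term and one Cauchy--Schwarz on the second bound $\E_P(\bar{P},\bar{V};\C_{z'})$ by multiples of $\|\nabla(\bar{v}-\bar{V})\|_{L^2(y^{\alpha},\C_{z'})}$ and $\|\nabla(\bar{p}-\bar{P})\|_{L^2(y^{\alpha},\C_{z'})}$.

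For the control indicator $\E_Z$ I would first recall that the variational inequality \eqref{eq:VI} is equivalent to the pointwise projection formula $\orsf=\Pi(-\tfrac{1}{\mu}\tr\bar{p})$, with $\Pi$ as in \eqref{def:Pi}. Since $\Pi$ is a pointwise projection onto $[\asf,\bsf]$ it is Lipschitz with unit constant, so inserting $\pm\,\orsf$ into \eqref{eq:defofEZ} and summing over the elements $K\subset S_{z'}$ gives $\E_Z(\bar{Z},\bar{P};S_{z'})\le\|\orsf-\bar{Z}\|_{L^2(S_{z'})}+\tfrac{1}{\mu}\|\tr(\bar{p}-\bar{P})\|_{L^2(S_{z'})}$, and a final application of \eqref{Trace_estimate_local} turns the trace term into $\|\nabla(\bar{p}-\bar{P})\|_{L^2(y^{\alpha},\C_{z'})}$.

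Adding the three local bounds produces \eqref{eq:efficiency}; the explicit constant $C(d_s,\mu)$ of \eqref{eq:C} is recovered by collecting the factors $d_s^{-1}$ stemming from the normalization of the forms $a_{z'}$ and the trace constant $C_{\tr}\le d_s^{-1/2}$, together with the factor $\mu^{-1}$ from the projection formula. I do not expect a genuine obstacle of principle here: in contrast to the reliability proof, efficiency is entirely local, requires no auxiliary adjoint states and no geometric condition such as \eqref{condition} on the mesh. The only delicate point is the bookkeeping---each residual must be decomposed so that precisely the three quantities $\|\nabla(\bar{v}-\bar{V})\|_{L^2(y^{\alpha},\C_{z'})}$, $\|\nabla(\bar{p}-\bar{P})\|_{L^2(y^{\alpha},\C_{z'})}$ and $\|\orsf-\bar{Z}\|_{L^2(S_{z'})}$, with no spurious cross terms, appear on the right---and the $d_s$- and $\mu$-dependent coefficients must be tracked through every Cauchy--Schwarz and trace step to reproduce \eqref{eq:C}.
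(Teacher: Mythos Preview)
Your proposal is correct and follows essentially the same route as the paper: bound each of $\E_V$, $\E_P$, $\E_Z$ separately by testing the local problems \eqref{eq:ideal_local_problemV} and \eqref{eq:ideal_local_problemP} with their own solutions, inserting $\pm\,\orsf$ (respectively $\pm\,\tr\bar v$) and using the local trace estimate \eqref{Trace_estimate_local} together with Cauchy--Schwarz, while the control indicator is handled via $\orsf=\Pi(-\tfrac{1}{\mu}\tr\bar p)$ and the Lipschitz property of $\Pi$. The paper's proof is organized identically into three steps and arrives at the same three local inequalities \eqref{eq:efficiency_V}, \eqref{eq:efficiency_P}, \eqref{eq:efficiency_Z}; your remark that the zero extension of $\psi\in\W(\C_{z'})$ to $\HL(y^{\alpha},\C_\Y)$ is what makes the truncated equations hold locally is exactly the mechanism the paper uses implicitly.
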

\begin{proof} We proceed in three steps.

Step 1. We begin by analyzing the efficiency properties of the indicator $\E_{V}$ defined, locally, by \eqref{eq:defofEV}. Let $z' \in \N(\T_{\Omega})$. We invoke the fact that $\zeta_{z'}$ solves the local problem \eqref{eq:ideal_local_problemV} and conclude that
\[
\E_{V}^2(\bar V, \bar Z; \C_{z'}) = a_{z'}( \zeta_{z'}, \zeta_{z'}) = \langle \orsf, \tr\zeta_{z'} \rangle + \langle \bar Z - \orsf, \tr\zeta_{z'} \rangle - a_{z'}( \bar V, \zeta_{z'}).
\]
Define $e_V = \bar{v} - \bar{V}$, where $\bar v$ solves \eqref{eq:alpha_harm_truncated}. Invoking \eqref{Trace_estimate_local} with $C_{\tr} \leq d_s^{-\frac{1}{2}}$ and a simple application of the Cauchy-Schwarz inequality, we arrive at 
\begin{align*}
\E_{V}^2(\bar V, \bar Z; \C_{z'}) & \leq d_s^{-1}\|  \nabla e_{V} \|_{L^2(y^{\alpha},\C_{z'})}\| \nabla \zeta_{z'}\|_{L^2(y^{\alpha},\C_{z'})} + \| \orsf -\bar{Z} \|_{L^2(S_{z'})} \| \tr \zeta_{z'}\|_{L^2(S_{z'})}
 \\
 & \leq \left( d_s^{-1}\|  \nabla e_{V} \|_{L^2(y^{\alpha},\C_{z'})} + d_s^{-\frac{1}{2}}\| \orsf -\bar{Z} \|_{L^2(S_{z'})} \right) \|  \nabla \zeta_{z'}\|_{L^2(y^{\alpha},\C_{z'})}.
\end{align*}
This, in view of definition \eqref{eq:defofEV}, implies the efficiency of $\E_{V}$:
\begin{equation}
\label{eq:efficiency_V}
\E_{V} (\bar V, \bar Z; \C_{z'}) \leq d_s^{-1} \|  \nabla e_{V} \|_{L^2(y^{\alpha},\C_{z'})} + d_s^{-\frac{1}{2}} \| \orsf -\bar{Z} \|_{L^2(S_{z'})}.
\end{equation}

Step 2. In this step we elucidate the efficiency properties of the indicator $\E_{P}$ defined in \eqref{eq:defofEP}. Following the arguments elaborated in step 1, we write 
\[
 \E_{P}^2(\bar P, \bar V; \C_{z'}) = \langle \tr(\bar V - \bar v), \chi_{z'}  \rangle + a_{z'}( e_{P}, \chi_{z'}),
\]
where $\chi_{z'} \in \W(\C_{z'})$ solves \eqref{eq:ideal_local_problemP}. An application of \eqref{Trace_estimate_local} with $C_{\tr} \leq d_s^{-\frac{1}{2}}$ and the Cauchy-Schwarz inequality yield
\begin{equation}
\label{eq:efficiency_P}
\E_{P} (\bar V, \bar Z; \C_{z'}) \leq d_s^{-1} \|  \nabla e_{V} \|_{L^2(y^{\alpha},\C_{z'})} + d_s^{-1} \| \nabla e_{P} \|_{L^2( y^{\alpha}, \C_{z'})}.
\end{equation}

Step 3. The goal of this step is to analyze the efficiency properties of the indicator $\E_{Z}$ defined by \eqref{eq:defofEZglobal}--\eqref{eq:defofEZ}. A trivial application of the triangle inequality yields
\[
\E_{Z}(\bar{Z},\bar{P}; S_{z'}) \leq \| \bar Z - \Pi ( -\tfrac{1}{\mu} \tr \bar p)   \|_{L^2(S_{z'})} + \| \Pi ( -\tfrac{1}{\mu} \tr \bar p) -  \Pi ( - \tfrac{1}{\mu} \tr \bar P)   \|_{L^2(S_{z'})},
\]
where $\Pi$ denotes the nonlinear projector defined by \eqref{def:Pi}. Now, in view of the local Lipschitz continuity of $\Pi$, the fact that $\orsf = \Pi ( -\tfrac{1}{\mu} \tr \bar p)$ and the trace estimate \eqref{Trace_estimate_local} imply that
\begin{equation}
\label{eq:efficiency_Z}
 \E_{Z}(\bar{Z},\bar{P}; S_{z'}) \leq \| \orsf - \bar{Z} \|_{L^2(S_{z'})} + \frac{d_s^{-\frac{1}{2}}}{\mu}\| \nabla e_P \|_{L^2(y^{\alpha},\C_{z'})}.
\end{equation}

Step 4. The desired estimate \eqref{eq:efficiency} follows from a collection of the estimates \eqref{eq:efficiency_V}, \eqref{eq:efficiency_P}, and \eqref{eq:efficiency_Z}. This concludes the proof. 
\end{proof}

\begin{remark}[Local efficiency]\rm
%  Examining the proof of Theorem \ref{th:global_efficiency}, we realize that both error indicators $\E_{V}$ and $\E_{P}$ are locally efficient; see inequalities \eqref{eq:efficiency_V} and \eqref{eq:efficiency_P}, respectively. In addition, in both inequalities the involved constants are known and depend only on the parameter $s$ through the constant $d_s$. We do not obtain a local efficiency bound because the term $\E_{Z}$ is not locally efficient. This is a recurring feature in a posteriori error estimation for constrained optimal control problems and we refer the reader to \cite{KRS} for a thorough discussion on this matter. Finally, we comment that in the global lower bound \eqref{eq:efficiency} the constant is \HA{known} and is given by $C(d_s,\mu)$ that is defined in \eqref{eq:C}.
  Examining the proof of Theorem \ref{th:global_efficiency}, we realize that the error indicators $\E_{V}$, $\E_{P}$ and $\E_{Z}$ are locally efficient; see inequalities \eqref{eq:efficiency_V}, \eqref{eq:efficiency_P} and \eqref{eq:efficiency_Z}, respectively. In addition, in all these inequalities the involved constants are known and depend only on the parameter $s$, through the constant $d_s$, and the parameter $\mu$. The key ingredients to derive the local efficiency property of the error estimator $\E_{Z}$ are the local Lipschitz continutiy of $\Pi$ and the trace estimate \eqref{Trace_estimate_local}. We comment that obtaining local a posteriori error bounds for the discretization of an optimal control problem is not always possible. We refer the reader to \cite[Remark~3.3]{KRS} for a thorough discussion on this matter. 
 \end{remark}

%%%%%%%%%%%%%%%%%%%%%%%%%%%%%%%%%%%%%%%%%%%%%%%%%%%%%%%%%%%%%%%%%%%%%%%%%%%%%%%%%%%%%%%%%%%%%%%%%%%%%%%%%%%%%%%%%%%%%%%%%%%%%%%%%%%%%%%%%%%%%%%%%%%%%%%%%%%%%%%%%%%%%%%%%%%
\subsection{A computable a posteriori error estimator}
\label{subsec:computable_a_posteriori}
%%%%%%%%%%%%%%%%%%%%%%%%%%%%%%%%%%%%%%%%%%%%%%%%%%%%%%%%%%%%%%%%%%%%%%%%%%%%%%%%%%%%%%%%%%%%%%%%%%%%%%%%%%%%%%%%%%%%%%%%%%%%%%%%%%%%%%%%%%%%%%%%%%%%%%%%%%%%%%%%%%%%%%%%%%%

The a posteriori error estimator proposed and analyzed in subsection \ref{subsec:ideal_a_posteriori} has an obvious drawback: given a node $z'$, its construction requires the knowledge of the functions $\zeta_{z'}$ and $\chi_{z'}$ that solve exactly the infinite--dimensional problems \eqref{eq:ideal_local_problemV} and \eqref{eq:ideal_local_problemP}, respectively. However, it provides intuition and sets the mathematical framework under which we will define a computable and anisotropic a posteriori error estimator. To describe it, we define the following discrete local spaces.

\begin{definition}[discrete local spaces]
\label{def:discrete_spaces}
For $z' \in \N(\T_\Omega)$, we define
\begin{align*}
  \mathcal{W}(\C_{z'}) &= 
    \left\{
      W \in C^0( \overline{\C_{z'}} ): W|_T \in \mathcal{P}_2(K) \otimes \mathbb{P}_2(I) \ \forall T = K \times I \in \C_{z'}, \right. \\
    &\left.  W|_{\partial \C_{z'} \setminus \Omega \times \{ 0\} } = 0
    \right\},
\end{align*}
where, if $K$ is a quadrilateral, $\mathcal{P}_2(K)$ stands for $\mathbb{Q}_2(K)$ --- the space of polynomials of degree not larger than $2$ in each variable.
If $K$ is a simplex, $\mathcal{P}_2(K)$ corresponds to $\mathbb{P}_2(K) \oplus \mathbb{B}(K)$ where where $\mathbb{P}_2(K)$ stands for the space of polynomials of total degree at most $2$, and $\mathbb{B}(K)$ is the space spanned by a local cubic bubble function.
\end{definition}

With these discrete spaces at hand, we proceed to define the computable counterpart of the error indicator $\E_{\mathrm{ocp}}$ given by \eqref{eq:defofEocp}. This indicator is defined as follows:
\begin{equation}
 \label{eq:defofEocp_computable}
 E_{\textrm{ocp}}(\bar{V},\bar{P},\bar{Z}; \T_{\Y}) = E_{V}(\bar{V},\bar{Z}; \N(\T_{\Omega})) + E_{P}(\bar{P},\bar{V}; \N(\T_{\Omega})) + E_{Z}(\bar{Z},\bar{P}; \T_{\Omega}),
\end{equation}
where $\T_{\Y} \in \Tr$ is the anisotropic mesh defined in subsection \ref{subsec:fully} and $\bar{V}$, $\bar{P}$ and $\bar{Z}$ denote the optimal variables solving the fully discrete optimal control problem. To describe the first contribution in \eqref{eq:defofEocp_computable}, we define $\eta_{z'} \in \Wcal(\C_{z'})$ as the solution to 
\begin{equation}
\label{eq:ideal_local_problemV_computable}
a_{z'}( \eta_{z'}, W)  = \langle \bar{Z}, \tr W  \rangle  -  a_{z'}(\bar{V}, W) \quad \forall W \in \Wcal(\C_{z'}).
\end{equation}
We then define the local and computable error estimator, for the state equation, as
\begin{equation}
\label{eq:defofEV_computable} 
E_{V}(\bar{V},\bar{Z}; \C_{z'}) := \| \nabla \eta_{z'} \|_{L^2(y^{\alpha},\C_{z'})},
\end{equation}
and the global error estimator 
$
E_{V}(\bar{V},\bar{Z}; \N(\T_{\Omega})) := \left( \sum_{z' \in \N(\T_{\Omega})} E_{V}^2(\bar{V},\bar{Z}; \C_{z'}) \right)^{\frac{1}{2}}.
$

The second contribution in \eqref{eq:defofEocp_computable} is defined on the basis of the discrete object $\theta_{z'} \in \Wcal(\C_{z'})$ that solves the following local problem:
\begin{equation}
\label{eq:ideal_local_problemP_computable}
a_{z'}( \theta_{z'}, W )= \langle \tr \bar{V} - \usf_d, \tr W \rangle  - a_{z'}( \bar{P}, W ) \quad \forall W \in \Wcal(\C_{z'}).
\end{equation}
We thus define the local and computable error indicator 
\begin{equation}
\label{eq:defofEP_computable} 
E_{P}(\bar{P},\bar{V}; \C_{z'}) := \| \nabla \theta_{z'} \|_{L^2(y^{\alpha},\C_{z'})}
\end{equation}
and the global error indicator $E_{P}(\bar{P},\bar{V}; \N( \T_{\Omega} ) ) := \left( \sum_{z' \in \N( \T_{\Omega} )} E_{P}^2(\bar{P},\bar{V}; \C_{z'}) \right)^{\frac{1}{2}}$.

The third contribution in \eqref{eq:defofEocp_computable}, \ie the error indicator associated to the optimal control $E_{Z}$, is defined by \eqref{eq:defofEZ}--\eqref{eq:defofEZglobal}.

We now explore the connection between the error estimator $E_{\textrm{ocp}}$ and the error. We first obtain a lower bound that does not involve any oscillation term. 

\begin{theorem}[local lower bound]
Let $(\bar{v}, \bar{p}, \orsf) \in \HL(y^{\alpha},\C_{\Y}) \times \HL(y^{\alpha},\C_{\Y}) \times \Zad$ be the solution to the optimality system associated with the truncated optimal control problem defined in subsection \ref{subsec:truncated} and $(\bar{V},\bar{P},\bar{Z}) \in \V(\T_{\Y}) \times \V(\T_{\Y}) \times \mathbb{Z}_{ad}(\T_{\Omega})$ its numerical approximation defined in subsection \ref{subsec:fully}. Then,
\begin{multline}
\label{eq:efficiency_computable}
 E_{V}(\bar{V},\bar{Z}; \C_{z'} ) + E_{P}(\bar{P},\bar{V}; \C_{z'} ) +  E_{Z}(\bar{Z},\bar{P}; S_{z'} ) \\ 
 \leq C(s,\mu) \left( \| \nabla( \bar{v} -\bar{V}) \|_{L^2(y^{\alpha},\C_{z'})}  +  \| \nabla( \bar{p} -\bar{P}) \|_{L^2(y^{\alpha},\C_{z'})} + \| \bar{\rsf} -\bar{Z} \|_{L^2(S_{z'})} \right),
\end{multline}
where $C(d_s,\mu)$ depends only on $d_s$ and the parameter $\mu$ and is defined in \eqref{eq:C}.
\label{th:global_efficiency_computable}
\end{theorem}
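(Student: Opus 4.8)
The plan is to follow the proof of Theorem~\ref{th:global_efficiency} almost verbatim, the single new ingredient being the conformity $\Wcal(\C_{z'}) \subset \W(\C_{z'})$: by Definition~\ref{def:discrete_spaces} the discrete local functions vanish on $\partial \C_{z'} \setminus \Omega \times \{0\}$, exactly as required by \eqref{eq:local_space}, so their zero-extensions are admissible test functions for the truncated state and adjoint equations \eqref{eq:alpha_harm_truncated} and \eqref{eq:p_truncated}. This containment is what allows the argument for the ideal estimator to transfer, with the discrete solutions $\eta_{z'}$ and $\theta_{z'}$ of \eqref{eq:ideal_local_problemV_computable} and \eqref{eq:ideal_local_problemP_computable} playing the roles of $\zeta_{z'}$ and $\chi_{z'}$. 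Throughout I set $e_V = \bar v - \bar V$ and $e_P = \bar p - \bar P$.

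For the state indicator, testing \eqref{eq:ideal_local_problemV_computable} with $W = \eta_{z'}$ gives $E_V^2(\bar V, \bar Z; \C_{z'}) = a_{z'}(\eta_{z'}, \eta_{z'}) = \langle \bar Z, \tr \eta_{z'} \rangle - a_{z'}(\bar V, \eta_{z'})$. Inserting $\pm \orsf$ and using that $\bar v$ solves \eqref{eq:alpha_harm_truncated}, restricted to $\C_{z'}$ and tested against $\eta_{z'} \in \Wcal(\C_{z'}) \subset \W(\C_{z'})$, so that $\langle \orsf, \tr \eta_{z'}\rangle = a_{z'}(\bar v, \eta_{z'})$, I would obtain the identity $E_V^2(\bar V, \bar Z; \C_{z'}) = a_{z'}(e_V, \eta_{z'}) + \langle \bar Z - \orsf, \tr \eta_{z'}\rangle$. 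Cauchy--Schwarz together with the local trace estimate \eqref{Trace_estimate_local} (with $C_{\tr} \le d_s^{-1/2}$) and cancellation of one factor $\| \nabla \eta_{z'}\|_{L^2(y^{\alpha},\C_{z'})}$ then reproduce \eqref{eq:efficiency_V} for the computable indicator.

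The adjoint indicator is handled analogously: testing \eqref{eq:ideal_local_problemP_computable} with $W = \theta_{z'}$, inserting $\pm \tr \bar v$, and using that $\bar p$ solves \eqref{eq:p_truncated} restricted to the cylindrical star against $\theta_{z'}$, I arrive at $E_P^2(\bar P, \bar V; \C_{z'}) = \langle \tr(\bar V - \bar v), \tr \theta_{z'}\rangle + a_{z'}(e_P, \theta_{z'})$, whence Cauchy--Schwarz and \eqref{Trace_estimate_local} give the exact analogue of \eqref{eq:efficiency_P}. Since $E_Z$ is defined by \eqref{eq:defofEZglobal}--\eqref{eq:defofEZ} identically to the ideal $\E_Z$, the bound \eqref{eq:efficiency_Z}---which rests only on the triangle inequality, the local Lipschitz continuity of $\Pi$, the identity $\orsf = \Pi(-\tfrac{1}{\mu}\tr \bar p)$, and \eqref{Trace_estimate_local}---holds verbatim. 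Summing the three estimates and absorbing the coefficients $2d_s^{-1}$, $d_s^{-1/2}(\mu^{-1}+d_s^{-1/2})$ and $1 + d_s^{-1/2}$ into $C(d_s,\mu)$ from \eqref{eq:C} delivers \eqref{eq:efficiency_computable}.

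I do not anticipate a genuine obstacle here: because efficiency is a lower bound on the estimator, conformity of the discrete local space is all that is required, and no saturation or discrete inf--sup property enters. The only point demanding care is the justification that the zero-extensions of $\eta_{z'}$ and $\theta_{z'}$ are admissible global test functions---that is, that the boundary conditions in Definition~\ref{def:discrete_spaces} match those of $\W(\C_{z'})$; once this is granted, every manipulation of the ideal proof transfers under $\zeta_{z'} \mapsto \eta_{z'}$ and $\chi_{z'} \mapsto \theta_{z'}$, and the constant is unchanged.
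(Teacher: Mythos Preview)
Your proposal is correct and follows essentially the same route as the paper's proof: test the discrete local problems with $\eta_{z'}$ and $\theta_{z'}$, insert $\pm\orsf$ (respectively $\pm\tr\bar v$), use that $\bar v$ and $\bar p$ solve the truncated state and adjoint equations against these conforming test functions, apply Cauchy--Schwarz and the local trace estimate \eqref{Trace_estimate_local}, and carry over the bound for $E_Z$ verbatim from the ideal case. Your explicit emphasis on the containment $\Wcal(\C_{z'})\subset\W(\C_{z'})$ is the one point the paper leaves implicit, and it is exactly what makes the argument transfer.
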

\begin{proof}
The proof of the estimate \eqref{eq:efficiency_computable} repeats the arguments developed in the proof of Theorem \ref{th:global_efficiency}. We analyze the local efficiency of the indicator $E_V$ defined in \eqref{eq:defofEV_computable}. To do this, we let $z' \in \N(\T_{\Omega})$. Employing the fact that $\eta_{z'}$ solves problem \eqref{eq:ideal_local_problemV_computable} and recalling that $\orsf$ denotes the optimal control, we arrive at 
\[
E_{V}^2(\bar V, \bar Z; \C_{z'}) = a_{z'}( \eta_{z'}, \eta_{z'}) = \langle \orsf, \eta_{z'}\rangle + \langle \bar Z - \orsf, \eta_{z'}\rangle - a_{z'}( \bar V, \eta_{z'}).
\]
Invoking the trace estimate \eqref{Trace_estimate_local} with constant $C_{\tr} \leq d_s^{-\frac{1}{2}}$, the fact that $\bar v$ solves problem \eqref{eq:alpha_harm_truncated}
and the Cauchy-Schwarz inequality, we obtain
% Define $e_V = \bar{v} - \bar{V}$, 
\[
E_{V}^2(\bar V, \bar Z; \C_{z'}) \leq \left( d_s^{-1}\|  \nabla (\bar v - \bar V)\|_{L^2(y^{\alpha},\C_{z'})} + d_s^{-\frac{1}{2}}\| \orsf -\bar{Z} \|_{L^2(S_{z'})} \right) \|  \nabla \eta_{z'}\|_{L^2(y^{\alpha},\C_{z'})},
\]
which, in light of \eqref{eq:defofEV_computable}, immediately yields the desired result
\begin{equation*}
\label{eq:EV_locally_efficient}
E_{V}(\bar V, \bar Z; \C_{z'}) \leq d_s^{-1}\|  \nabla (\bar v - \bar V)\|_{L^2(y^{\alpha},\C_{z'})} + d_s^{-\frac{1}{2}}\| \orsf -\bar{Z} \|_{L^2(S_{z'})}.
\end{equation*}

The efficiency analysis for the terms $E_{P}$ and $E_{Z}$ follow similar arguments. For brevity we skip the proof.
\end{proof}

\begin{remark}[strong efficiency]
\rm
We remark that that the lower bound \eqref{eq:efficiency_computable} implies a strong concept of efficiency: it is free of any oscillation term and the involved constant $C(d_s,\mu)$ is known and given by \eqref{eq:C}. The relative size of the local error indicator dictates mesh refinement regardless of fine structure of the data. The analysis is valid for the family of anisotropic meshes $\T_\Y$ and allows the nonuniformly coefficients involved in problems \eqref{eq:alpha_harm_truncated} and \eqref{eq:p_truncated}.
\end{remark}

We now proceed to analyze the reliability properties of the anisotropic and computable error indicator $E_{\textrm{ocp}}$ defined in \eqref{eq:defofEocp_computable}. To achieve this, we introduce the so-called \emph{data oscillation}. Given a function $f \in L^2(\Omega)$ and $z' \in \N(\T_\Omega)$, we define the local oscillation of the function $f$ as
\begin{equation}
\label{eq:defoflocosc}
  \osc(f; S_{z'}) := h_{z'}^{s} \| f - f_{z'} \|_{L^2(S_{z'})},
\end{equation}
where, $h_{z'} = \min\{h_{K}: K \ni z' \}$ and $f_{z'}|_K \in \R$ is the average of $f$ over $K$, i.e., 
\begin{equation}
 f_{z'}|_K := \fint_{K} f.
\end{equation}
The global data oscillation is then defined as
\begin{equation}
\label{eq:defofglobosc}
  \osc(f;\T_\Omega) := \left( \sum_{z' \in \N(\T_\Omega)} \osc( f; S_{z'} )^2 \right)^{\frac{1}{2}}.
\end{equation}

To present our results in a concise manner, we define $D = (\usf_d, \tr \bar V)$ and
\begin{equation}
\label{eq:defofgloboscD}
  \osc(D ;S_{z'}) := \osc(\usf_d ;S_{z'})
  + \osc(\tr \bar V ;S_{z'}).
%   + \osc(\bar Z ;S_{z'}).
\end{equation}
We define $\osc(D ; \T_{\Omega})$ accordingly. We also define the \emph{total error indicator}
\begin{equation}
\label{total_error}
\mathcal{E}(\bar V, \bar P, \bar Z ; \C_{z'}) := \left( E_{\textrm{ocp}}(\bar V, \bar P, \bar Z; \C_{z'})^2 + \osc(D ;S_{z'})^2 \right)^{\frac{1}{2}} \quad 
\forall z' \in \N(\T_{\Omega}).
\end{equation}
This indicator will be used to mark elements for refinement in the AFEM proposed in section \ref{sec:numerics}. The following remark is then necessary.

\begin{remark}[marking] \rm
We comment that, in contrast to \cite{MR2421046}, the proposed AFEM will utilize the total error indicator, namely the sum of energy error and oscillation, for marking. This could be avoided if $E_{\textrm{ocp}}(\bar V, \bar P, \bar Z; \C_{z'}) \geq C \osc(\usf_d; S_{z'})$ for $C > 0$. While this property is trivial for the residual estimator with $C = 1$, it is in general false for other families of estimators such as the one we are proposing in this work. We refer to \cite{MR2421046} for a thorough discussion on this matter.
\end{remark}

Let $\mathscr{K}_{\T_{\Omega}} = \{ S_{z'} : z' \in \N(\T_\Omega)\}$ and, for any $\mathscr{M} \subset \mathscr{K}_{\T_{\Omega}}$, we set $\mathscr{M}_{\Y} = \mathscr{M} \times (0,\Y)$ and 
\begin{equation}\label{total_est}
\mathcal{E} ( \bar V, \bar P, \bar Z; \mathscr{M}_{\Y} ) := 
 \left( \sum_{S_{z'} \in \mathscr{M} } \mathcal{E}(\bar V, \bar P, \bar Z ; \C_{z'} )^2 \right)^{1/2},
\end{equation}
where, we recall that $\C_{z'} = S_{z'} \times (0,\Y)$. With these ingredients at hand, we present the following result.

\begin{theorem}[global upper bound]
Let $(\bar{v}, \bar{p}, \orsf) \in \HL(y^{\alpha},\C_{\Y}) \times \HL(y^{\alpha},\C_{\Y}) \times \Zad$ be the solution to the optimality system associated with the truncated optimal control problem defined in subsection \ref{subsec:truncated} and $(\bar{V},\bar{P},\bar{Z}) \in \V(\T_{\Y}) \times \V(\T_{\Y}) \times \mathbb{Z}_{ad}(\T_{\Omega})$ its numerical approximation defined in subsection \ref{subsec:fully}. If \eqref{condition} and \cite[Conjecture 5.28]{CNOS} hold, then
\begin{equation}
\| \nabla( \bar{v} -\bar{V}) \|_{L^2(y^{\alpha},\C_{\Y})}   +  \| \nabla( \bar{p} -\bar{P}) \|_{L^2(y^{\alpha},\C_{\Y})}
 + \| \bar{\rsf} -\bar{Z} \|_{L^2(\Omega)} \lesssim \mathcal{E}(\bar{V},\bar{P},\bar{Z}; \T_{\Y}),
 \label{eq:reliability_computable}
\end{equation}
where the hidden constant is independent of the continuous and discrete optimal variables and the size of the elements in the meshes $\T_{\Omega}$ and $\T_{\Y}$.
\label{th:reliability_computable}
\end{theorem}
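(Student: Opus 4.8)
The plan is to retrace, essentially verbatim, the six-step argument of Theorem~\ref{th:reliability}, replacing every invocation of the \emph{ideal} scalar reliability estimate \cite[Proposition 5.14]{CNOS2} by its \emph{computable} counterpart, which is exactly the content of \cite[Conjecture 5.28]{CNOS}: for the scalar weighted extension problem, the Galerkin error measured in $\|\nabla\cdot\|_{L^2(y^{\alpha},\C_{\Y})}$ is controlled, under \eqref{condition}, by the star-based estimator built on the discrete spaces of Definition~\ref{def:discrete_spaces} plus the data oscillation \eqref{eq:defofglobosc}. The algebraic skeleton of the control-problem argument is insensitive to whether the underlying scalar estimator is ideal or computable, so only the two places where the scalar reliability bound enters need to be modified.

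First I would set $\tilde{\rsf}=\Pi(-\tfrac{1}{\mu}\tr\bar P)$ and split $\|\orsf-\bar Z\|_{L^2(\Omega)}\leq\|\orsf-\tilde{\rsf}\|_{L^2(\Omega)}+E_{Z}(\bar Z,\bar P;\T_{\Omega})$, the last term being $E_Z$ by definition. Testing the variational inequalities \eqref{eq:VI} for $\orsf$ and \eqref{eq:VI_rtilde} for $\tilde{\rsf}$ reduces the control error to estimating $(\tr(\bar p-\bar P),\tilde{\rsf}-\orsf)_{L^2(\Omega)}$. Introducing the auxiliary adjoint state $q$ of \eqref{eq:q_truncated} and the auxiliary states $v^\star$, $\tilde v$, $w$ of \eqref{eq:v_star}, \eqref{def:tildev}, \eqref{eq:w_truncated}, the same manipulations as in Steps~2--3 isolate the two Galerkin errors $\|\nabla(q-\bar P)\|_{L^2(y^{\alpha},\C_{\Y})}$ and $\|\nabla(v^\star-\bar V)\|_{L^2(y^{\alpha},\C_{\Y})}$, since, by the symmetry of $a_{\Y}$, $\bar P$ and $\bar V$ are precisely the Galerkin approximations over $\V(\T_{\Y})$ of $q$ and $v^\star$, respectively.

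At exactly these two points I apply \cite[Conjecture 5.28]{CNOS}. For the state problem \eqref{eq:v_star} the datum is the piecewise-constant discrete control $\bar Z$, whose oscillation \eqref{eq:defoflocosc} vanishes because $\bar Z_{z'}|_K=\bar Z|_K$; hence the conjecture gives $\|\nabla(v^\star-\bar V)\|\lesssim E_{V}(\bar V,\bar Z;\N(\T_{\Omega}))$ with no oscillation contribution, consistent with $D=(\usf_d,\tr\bar V)$ in \eqref{eq:defofgloboscD}. For the adjoint problem \eqref{eq:q_truncated} the datum is $\tr\bar V-\usf_d$, and the conjecture yields $\|\nabla(q-\bar P)\|\lesssim\big(\sum_{z'}[E_{P}^2(\bar P,\bar V;\C_{z'})+\osc(\tr\bar V-\usf_d;S_{z'})^2]\big)^{1/2}$, where $\osc(\tr\bar V-\usf_d;S_{z'})\leq\osc(D;S_{z'})$ by the triangle inequality. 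Feeding these into Steps~2--3 and absorbing the quadratic control terms by Young's inequality as before produces $\|\orsf-\bar Z\|_{L^2(\Omega)}\lesssim\mathcal{E}(\bar V,\bar P,\bar Z;\T_{\Y})$. Steps~4--5, which bound $\|\nabla(\bar v-\bar V)\|$ and $\|\nabla(\bar p-\bar P)\|$ through the splittings $\bar v-\bar V=(\bar v-v^\star)+(v^\star-\bar V)$ and $\bar p-\bar P=(\bar p-q)+(q-\bar P)$, then carry over once the stability of \eqref{eq:v_star}, \eqref{eq:q_truncated} and the same computable bounds are used; collecting the three contributions gives \eqref{eq:reliability_computable}.

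The main obstacle is concentrated entirely in the passage from the ideal to the computable scalar reliability bound. The ideal estimators $\E_V,\E_P$ solve \emph{infinite-dimensional} local problems and are unconditionally reliable by \cite[Proposition 5.14]{CNOS2}; the computable estimators $E_V,E_P$ replace these by discrete local problems on the enriched $\mathcal{P}_2\otimes\mathbb{P}_2$ spaces of Definition~\ref{def:discrete_spaces}, which only capture the component of the residual visible to a finite-dimensional test space, so the complementary component must be absorbed into $\osc(D;\T_{\Omega})$. Establishing that this absorption is valid in the anisotropic, Muckenhoupt-weighted setting is exactly the open point, and is therefore assumed as \cite[Conjecture 5.28]{CNOS}; everything else is a purely algebraic rearrangement of the optimality systems, so once the conjecture is granted the control, state, and adjoint estimates follow as in Theorem~\ref{th:reliability}.
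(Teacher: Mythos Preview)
Your proposal is correct and follows essentially the same route as the paper's proof: both retrace the six-step argument of Theorem~\ref{th:reliability}, replacing the ideal scalar reliability bound by the computable one (the paper phrases this as invoking \cite[Theorem~5.37]{CNOS2}, which is valid under \cite[Conjecture~5.28]{CNOS}) at the two Galerkin points $\|\nabla(q-\bar P)\|$ and $\|\nabla(v^{\star}-\bar V)\|$. Your observation that the oscillation of the piecewise-constant datum $\bar Z$ vanishes is a useful clarification of why only $D=(\usf_d,\tr\bar V)$ appears in \eqref{eq:defofgloboscD}.
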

\begin{proof}
The proof of the estimate \eqref{eq:reliability_computable} follows closely the arguments developed in the proof of Theorem \ref{th:reliability}; the difference being the use of the computable error indicator $E_{\textrm{ocp}}$ instead of the ideal estimator $\E_{\textrm{ocp}}$. We start by bounding the error in the control approximation. Defining $\tilde \rsf = \Pi (-\frac{1}{\mu} \tr \bar P)$, estimate \eqref{eq:r-Z-rtilde} implies that
\begin{equation}
\label{eq:r-Z-rtilde_computable}
 \| \orsf - \bar{Z} \|_{L^2(\Omega)} \leq  \| \orsf - \tilde{\rsf} \|_{L^2(\Omega)}  +  E_{Z}(\bar Z,\bar P; \T_{\Omega}).
\end{equation}
To control the remainder term, we invoke \eqref{eq:muleq} with $q$ defined by \eqref{eq:q_truncated} and write
\begin{equation}
\label{eq:muleq_computable}
\mu \| \orsf - \tilde{\rsf} \|^2_{L^2(\Omega)} \leq  (\tr(\bar{p} - q), \tilde{\rsf} - \orsf )_{L^2(\Omega)} + (\tr(q- \bar{P}), \tilde{\rsf} - \orsf )_{L^2(\Omega)} = \textrm{I} + \textrm{II}.
\end{equation}
To control the term $\textrm{II}$, we invoke the fact that $\bar P$, solution of problem \eqref{eq:P_discrete}, corresponds to the Galerkin approximation of $q$, solution of problem \eqref{eq:q_truncated}. This, in view of \cite[Theorem 5.37]{CNOS2}, yields
\begin{equation}
\label{eq:aux_computable}
 | \textrm{II} | \leq \frac{\mu}{4} \| \tilde \rsf - \orsf \|^2_{L^2(\Omega)} + C \left( E^2_{P}(\bar P, \bar V; \N(\T_{\Omega})) + \osc^2(D;\T_{\Omega})
%  \osc^2(\usf_d;\T_{\Omega}) +  \osc^2(\tr \bar V;\T_{\Omega}) 
\right), 
\end{equation}
where $C$ denotes a positive constant and $\osc$ is defined by \eqref{eq:defoflocosc} and \eqref{eq:defofglobosc}. 

To control the term $\mathrm{I}$, we write $\mathrm{I} = \mathrm{I}_1 + \mathrm{I}_2 := ( \tr(\bar{p} - w), \tilde \rsf - \orsf )_{L^2(\Omega)} + (\tr(w - q) , \tilde \rsf - \orsf )_{L^2(\Omega)}$, where $w$ is defined as the solution to \eqref{eq:w_truncated}. Step 3 in the proof of Theorem \ref{th:reliability} implies that $\mathrm{I}_1 \leq 0$. To control the term $\mathrm{I}_2$, we invoke \eqref{eq:aux1} and write 
\begin{equation}
\label{eq:aux1_computable}
 |\mathrm{I}_2| \lesssim \| \nabla ( w-q )  \|_{L^2(y^{\alpha},\C_{\Y})} \|\tilde{\rsf} - \orsf  \|_{L^2(\Omega)} \lesssim \|  \tr (\tilde{v}-\bar{V})  \|_{L^2(\Omega)} \|\tilde{\rsf} - \orsf  \|_{L^2(\Omega)}.
\end{equation}
We now write $\tilde{v}-\bar{V} = (\tilde{v}-v^*) - (v^* - \bar{V})$, where $v^*$ solves \eqref{eq:v_star}, and estimate each contribution separately. First, stability of \eqref{eq:v_star} yields
\begin{equation}
\label{eq:aux2_computable}
\|  \tr (\tilde{v}-v^*)  \|_{L^2(\Omega)} \lesssim E_{Z}(\bar{Z},\bar{P};\T_{\Omega}).
\end{equation}
Second, since $\bar V$ corresponds to the Galerkin approximation of $v^*$, \cite[Theorem 5.37]{CNOS2} implies the estimate
\begin{equation}
\label{eq:aux3_computable}
\|  \tr (v^*-\bar{V})  \|_{L^2(\Omega)} \lesssim \| \nabla( v^*-\bar{V} ) \|_{L^2(y^{\alpha},\C_{\Y})} \lesssim E_{V}(\bar{V},\bar{Z};\N(\T_{\Omega})). 
\end{equation}
This, in view of \eqref{eq:aux1_computable} and  \eqref{eq:aux2_computable}, implies that
\begin{align*}
 |\mathrm{I}_2| \leq & \frac{\mu}{4}\|\tilde{\rsf} - \orsf  \|^2_{L^2(\Omega)} + C \left( E^2_{V}(\bar{V},\bar{Z};\N(\T_{\Omega})) + E^2_{Z}(\bar{Z},\bar{P};\T_{\Omega}) \right),
%  + C_2 \left( 
%  %\osc^2(\usf_d;\T_{\Omega}) + \osc^2(\tr \bar V;\T_{\Omega}) + 
%  \HA{\osc^2(\bar Z;\T_{\Omega})} \right),
\end{align*}
where $C$ denotes a positive constant. Since $\mathrm{I}_1 \leq 0$, a similar estimate holds for $\mathrm{I} = \mathrm{I}_1 + \mathrm{I}_2$. This estimate, in conjunction with the previous bound, and the estimates \eqref{eq:r-Z-rtilde_computable}, \eqref{eq:aux_computable} and \eqref{eq:muleq_computable} imply that
\begin{align*}
 \|\tilde{\rsf} - \orsf  \|^2_{L^2(\Omega)} \lesssim  E^2_{\textrm{ocp}}(\bar{V},\bar{P},\bar{Z};\T_{\Y}) + \osc^2(D;\T_{\Omega}). 
%  \osc^2(\usf_d;\T_{\Omega}) 
%  + \osc^2(\tr \bar V;\T_{\Omega}) 
%  + \osc^2(\bar Z;\T_{\Omega}).
\end{align*}

The estimates for the terms $\|\nabla( \bar v - \bar V)  \|_{L^2(y^{\alpha},\C_{\Y})}$ and $\|\nabla( \bar p - \bar P)  \|_{L^2(y^{\alpha},\C_{\Y})}$ follow similar arguments to the ones elaborated in the steps 4 and 5 of the proof of Theorem \ref{th:reliability}. For brevity we skip the details.
\end{proof}

\begin{remark}[Conjecture 5.28 in \cite{CNOS}]\rm
Examining the proof of Theorem \ref{th:reliability_computable}, we realize that the key steps where \cite[Theorem 5.37]{CNOS2} is invoked are \eqref{eq:muleq_computable} and \eqref{eq:aux3_computable}. The results of \cite[Theorem 5.37]{CNOS2} are valid under the assumption of the existence of an operator $\mathcal{M}_{z'}$ that verify the conditions stipulated in \cite[Conjecture 5.28]{CNOS2}. The construction of the operator $\mathcal{M}_{z'}$ is an open problem. The numerical experiments of \cite[section 6]{CNOS} provide consistent computational evidence of the existence of $\mathcal{M}_{z'}$ with the requisite properties.
\end{remark}

%%%%%%%%%%%%%%%%%%%%%%%%%%%%%%%%%%%%%%%%%%%%%%%%%%%%%%%%%%%%%%%%%%%%%%%%%%%%%%%%%%%%%%
\section{Numerical Experiments}
\label{sec:numerics}
%%%%%%%%%%%%%%%%%%%%%%%%%%%%%%%%%%%%%%%%%%%%%%%%%%%%%%%%%%%%%%%%%%%%%%%%%%%%%%%%%%%%%%
In this section we conduct a numerical example that illustrates the performance of the proposed error estimator. To accomplish this task, we formulate an adaptive finite element method (AFEM) based on the following iterative loop:
\begin{equation}\label{eq:Aloop}
 \textsf{\textup{SOLVE}} \rightarrow  \textsf{\textup{ESTIMATE}} \rightarrow \textsf{\textup{MARK}} \rightarrow \textsf{\textup{REFINE}}
\end{equation}

\subsection{Design of AFEM}
\label{subsec:designofAfem}

We proceed to describe the four modules in \eqref{eq:Aloop}

\begin{enumerate}
 \item[$\bullet$] \textsf{\textup{SOLVE}:} Given $\T_{\Y}$, we compute $(\bar{Y},\bar{P},\bar{Z}) \in \V(\T_{\Y})\times \V(\T_{\Y})\times \mathbb{Z}_{ad}(\T_{\Omega})$, the solution to the fully discrete optimal control problem defined in subsection \ref{subsec:fully}:
 \[
  (\bar{Y},\bar{P},\bar{U}) = {\rm SOLVE}(\T_{\Y}).
 \]
To solve the minimization problem, we have used the projected BFGS method with Armijo line search; see \cite{MR1678201}. The optimization algorithm is terminated when the $\ell^2$-norm of the projected gradient is less or equal to $10^{-5}$.

 \item[$\bullet$] \textsf{\textup{ESTIMATE}:} Once a discrete solution is obtained, we compute, for each $z' \in \N(\T_{\Omega})$, the local error indicator \eqref{eq:defofEocp_computable}, which reads
\begin{equation*}
 E_{\textrm{ocp}}(\bar{V},\bar{P},\bar{Z}; \C_{z'}) = E_{V}(\bar{V},\bar{Z}; \C_{z'}) + E_{P}(\bar{P},\bar{V}; \C_{z'}) + E_{Z}(\bar{Z},\bar{P}; S_{z'}),
\end{equation*}
where the indicators $E_{V}$, $E_{P}$, $E_{Z}$ are defined by \eqref{eq:defofEV_computable}, \eqref{eq:defofEP_computable} and \eqref{eq:defofEZ}, respectively. We then compute the oscillation term \eqref{eq:defofgloboscD} and construct the total error indicator \eqref{total_error}:
\[
 \left\{ \mathcal{E} (\bar V, \bar P, \bar Z; S_{z'}) \right\}_{ S_{z'} \in \mathscr{K}_{\T_\Omega}} = \textsf{\textup{ESTIMATE}}(\bar V, \bar P, \bar Z; \T_{\Y}),
\]
where $\mathscr{K}_{\T_{\Omega}} = \{ S_{z'} : z' \in \N(\T_\Omega)\}$. For notational convenience, and in view of the fact that $\C_{z'} = S_{z'} \times (0,\Y)$ we replaced $\C_{z'}$ by $S_{z'}$ in the previous formula.
\item[$\bullet$] \textsf{\textup{MARK}:} Using the so--called  D\"{o}rfler marking strategy \cite{MR1393904} (bulk chasing strategy) with parameter $\theta$ with $\theta \in (0,1]$, we select a set 
\[
  \mathscr{M} = \textsf{\textup{MARK}} \left( \left\{ \mathcal{E} (\bar V, \bar P, \bar Z; S_{z'}) \right\}_{ S_{z'} \in \mathscr{K}_{\T_\Omega}}, (\bar V, \bar P, \bar Z) \right) \subset \mathscr{K}_{\T_{\Omega}}
\]
of minimal cardinality that satisfies
\[
  \mathcal{E} ( (\bar V, \bar P, \bar Z),\mathscr{M})  \geq \theta \mathcal{E} ((\bar V, \bar P, \bar Z),  \mathscr{K}_{\T_{\Omega}}). 
\]
\item[$\bullet$] \textsf{\textup{REFINEMENT}:} We generate a new mesh $\T_\Omega'$ by bisecting all the elements $K \in \T_{\Omega}$ contained in $\mathscr{M}$ based on the newest-vertex bisection method \cite{NSV:09,NV}. We choose the truncation parameter as $\Y = 1 + \tfrac{1}{3}\log(\# \T_{\Omega}')$ to balance the approximation and truncation errors \cite[Remark 5.5]{NOS}. The mesh $\mathcal{I}_\Y'$ is constructed by the rule \eqref{eq:graded_mesh}, with a number of degrees of freedom $M$ sufficiently large so that \eqref{condition} holds. This is attained by first creating a partition $\mathcal{I}_\Y'$ with $M \approx (\# \T_\Omega')^{1/n}$
and checking \eqref{condition}. If this condition is violated, we increase 
the number of points until we get the desired result. The new mesh
\[
  \T_\Y' = \textsf{\textup{REFINE}}(\mathscr{M}),
\]
is obtained as the tensor product of $\T_\Omega'$ and $\mathcal{I}_\Y'$.
\end{enumerate}

\subsection{Implementation}
\label{subsec:implementation}

The AFEM \eqref{eq:Aloop} is implemented within the MATLAB software library {\it{i}}FEM~\cite{Chen.L2008c}. All matrices have been assembled exactly. The right hand sides are  computed by a quadrature formula which is exact for polynomials of degree $4$.  All linear systems were solved using he multigrid method with line smoother introduced and analyzed in~\cite{CNOS}.

To compute the solution $\eta_{z'}$ to the discrete local problem \eqref{eq:ideal_local_problemV_computable} we proceed as follow: we loop around each node $z' \in \N(\T_{\Omega})$, 
collect data about the cylindrical star $\C_{z'}$ and assemble the small linear
system \eqref{eq:ideal_local_problemV_computable}.  This linear system is solved by the built-in \emph{direct solver} of MATLAB. To compute the solution $\theta_{z'}$ to the discrete local problem \eqref{eq:ideal_local_problemP_computable}, we proceed similarly. All integrals involving only the weight and discrete functions
are computed exactly, whereas those also involving data functions are computed
element-wise by a quadrature formula which is exact for polynomials of degree 7. 

For convenience, in the \textsf{\textup{MARK}} step we change the estimator from star--wise to element--wise. To acomplish this task, we first scale the nodal-wise estimator as $E_{\textrm{ocp}}^2(\bar V, \bar P, \bar Z; \C_{z'}) / (\# S_{z'} )$ and then, for each element $K \in \T_{\Omega}$, we compute 
\[
E_{\textrm{ocp}}^2(\bar V, \bar P, \bar Z; K_{\Y}) := \sum_{z'\in K} E_{\textrm{ocp}}^2(\bar V, \bar P, \bar Z; \C_{z'}), 
\]
where $K_{\Y} = K \times (0,\Y)$. The scaling is introduced so that  
\[
\sum _{K \in \T_{\Omega} }E_{\textrm{ocp}}^2(\bar V, \bar P, \bar Z; K_{\Y}) = \sum _{z' \in \N(\T_\Omega) } E_{\textrm{ocp}}^2(\bar V, \bar P, \bar Z; \C_{z'}).         \]
The cell-wise data oscillation is now defined as
\[
\osc (f;K)^2 := h_{K}^{2s}\|f - \bar f_K\|^2_{L^2(K)}, 
\]
where $\bar f_K$ denotes the average of $f$ over the element $K$. This quantity is computed using a quadrature formula which is exact for polynomials of degree 7.

\subsection{L-shaped domain with incompatible data}
\label{sub:Lshapedcompatible}

For our numerical example we consider the worst possible scenario:
\begin{enumerate}[(\textsf{D}1)]
\item \label{incomp_ab} $\asf = 0.1$, $\bsf = 0.3$. This implies that the optimal control $\bar{\zsf} \not \in \mathbb{H}^{1-s}(\Omega)$ when $s < \frac{1}{2}$. We will refer to the optimal control $\ozsf$ as \emph{incompatible datum} for \eqref{eq:alpha_harm_truncated}.
\item \label{incom_ud} $\usf_d = 1$. This element does not belong to $\mathbb{H}^{1-s}(\Omega)$ when $s < \frac{1}{2}$. Therefore, for $s < \frac{1}{2}$, $\usf_d$ is an \emph{incompatible datum} for problem \eqref{eq:p_truncated}.
\item \label{nonconvex} $\Omega = (-1,1)^2 \setminus (0,1) \times (-1,0)$, i.e., the well known L-shaped domain; see Figure~\ref{f:lshape}. 
\end{enumerate}

In view of (\textsf{D}\ref{incomp_ab}) and (\textsf{D}\ref{incom_ud}), we conclude that the right hand sides of the state and adjoint equations, problems \eqref{eq:alpha_harm_truncated} and \eqref{eq:p_truncated}, respectively, are incompatible for $s < \frac{1}{2}$. As discussed in \cite[section~6.3]{NOS}, at the level of the state equation, this results in lower rates of convergence when quasi-uniform refinement of $\Omega$ is employed. In addition, we consider a situation where the domain $\Omega$ is noncovex.
% On the other hand, owing to the nonconvexity of the domain $\Omega$ (\textsf{D}\ref{nonconvex}) we have lower regularity properties for both the state and adjoint state variables. 
As a result, the hypothesis of Theorem~\ref{fd2} does not hold and then it cannot be applied.

We set $\mu = 1$, and we comment that we do not explicitly enforce the mesh restriction \eqref{condition}, which shows that this is nothing but an artifact in our theory. 

As Figure~\ref{f:LshapeEst} illustrates, using our proposed AFEM driven by the error indicator \eqref{total_error}, we can recover the optimal rates of convergence \eqref{fd2}--\eqref{fd1} for all values of $s$ considered: $s = 0.2, 0.4, 0.6$, and $s = 0.8$. We remark, again, that we are operating under the conditions (\textsf{D}\ref{incomp_ab})--(\textsf{D}\ref{nonconvex}) and then Theorem~\ref{fd2} cannot be applied. Since, for $s<\frac{1}{2}$, the data is incompatible (\textsf{D}\ref{incomp_ab})--(\textsf{D}\ref{incom_ud}), the optimal and adjoint states exhibit boundary layers. To capture them, our AFEM refines near the boundary; see Figure~\ref{f:lshape} (middle). In contrast, when $s > \frac{1}{2}$ such incompatibilities does not occur and then our AFEM focuses to resolve the reentrant corner; see Figure~\ref{f:lshape} (right). The left panel in Figure~\ref{f:lshape} shows the initial mesh. We comment that the middle and the right panels are obtained with 17 AFEM cycles.

\begin{figure}[h!]
\centering
\includegraphics[width=0.6\textwidth]{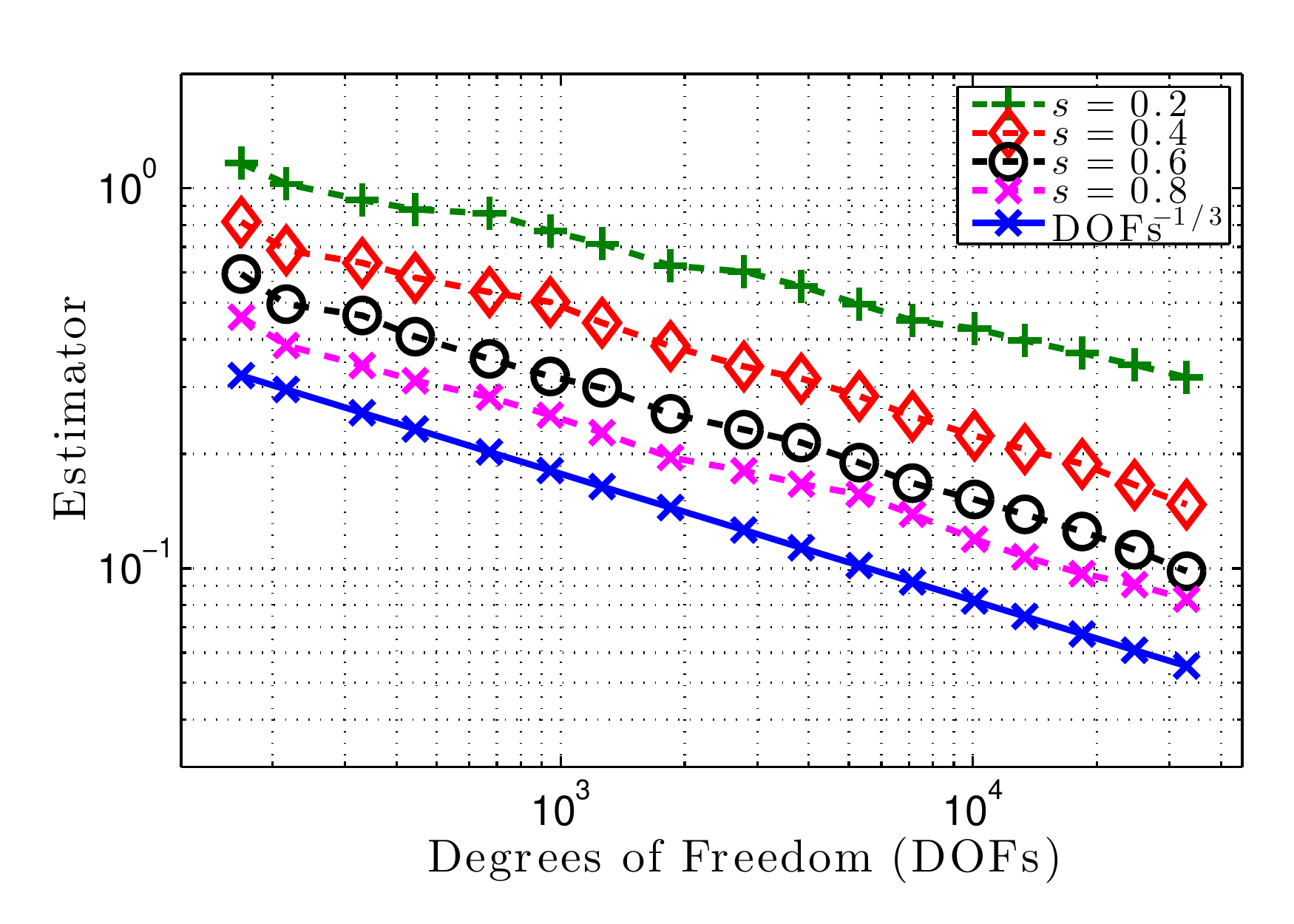}
\caption{\label{f:LshapeEst}
Computational rate of convergence for our anisotropic AFEM with incompatible right hand sides for both the state equation and the adjoint equation over an L--shaped domain (non-convex domain). We consider $n = 2$. Since the exact solution is not known for this problem, we present the total error estimator with respect to the number of degrees of freedom. In all cases we recover the optimal rate of convergence $(\# \T_{\Y})^{-1/3}$.}
% Notice that, for $s < \frac{1}{2}$, the right hand side of the state equation $\bar{\zsf} \not\in \mathbb{H}^{1-s}(\Omega)$. In addition, the right hand side of the adjoint equation $\tr \usf - \usf_d = 1 \not\in \mathbb{H}^{1-s}(\Omega)$ for $s < \frac{1}{2}$. In light of these inconsistencies the quasi-uniform mesh in $\Omega$ does not deliver the optimal rate of convergence, see  \cite[Section~6.3]{NOS} where such scenario has been highlighted for the state equation.}
\end{figure}

\begin{figure}[h!]
\centering
\includegraphics[width=0.31\textwidth]{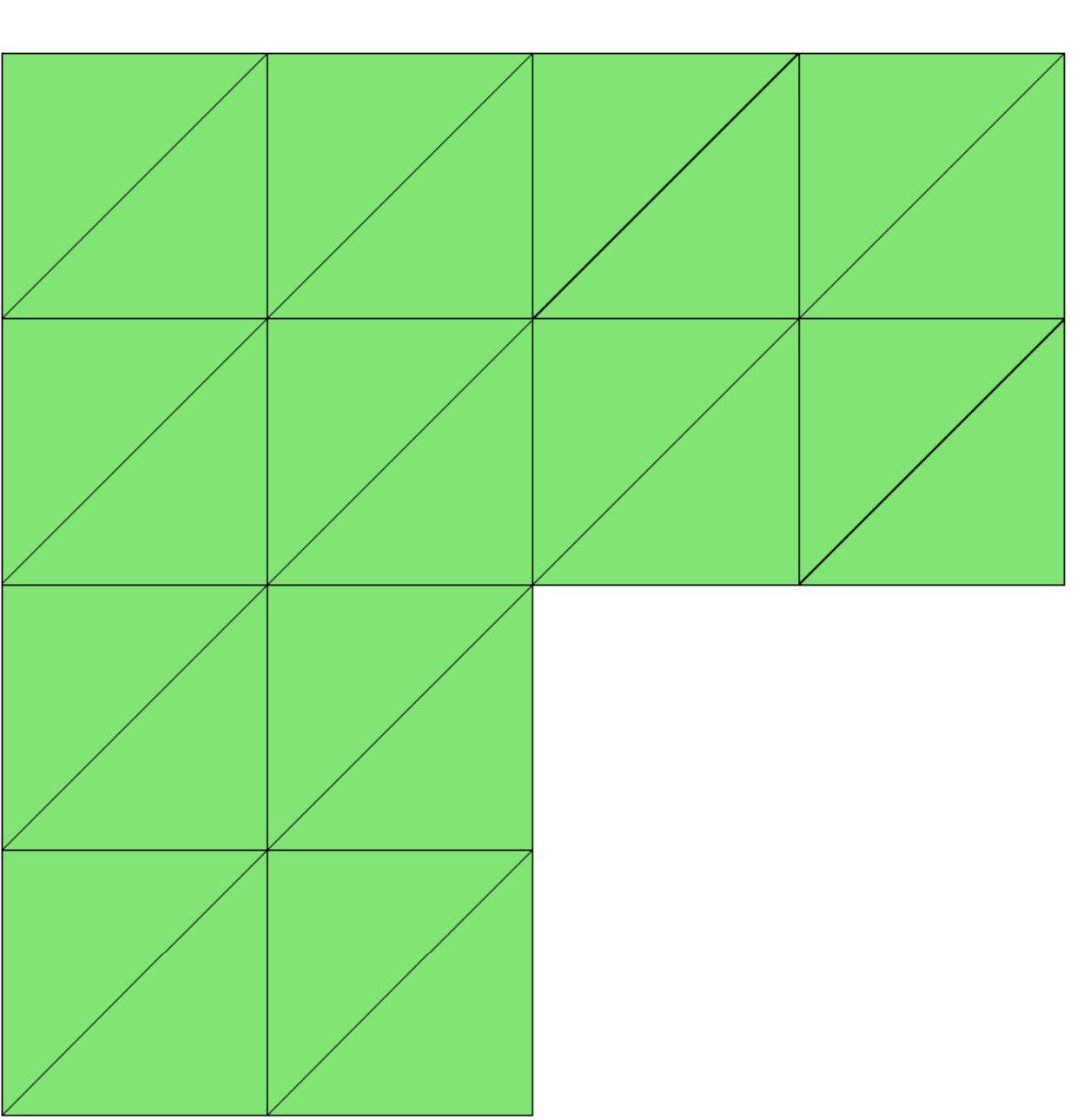}\quad
\includegraphics[width=0.31\textwidth]{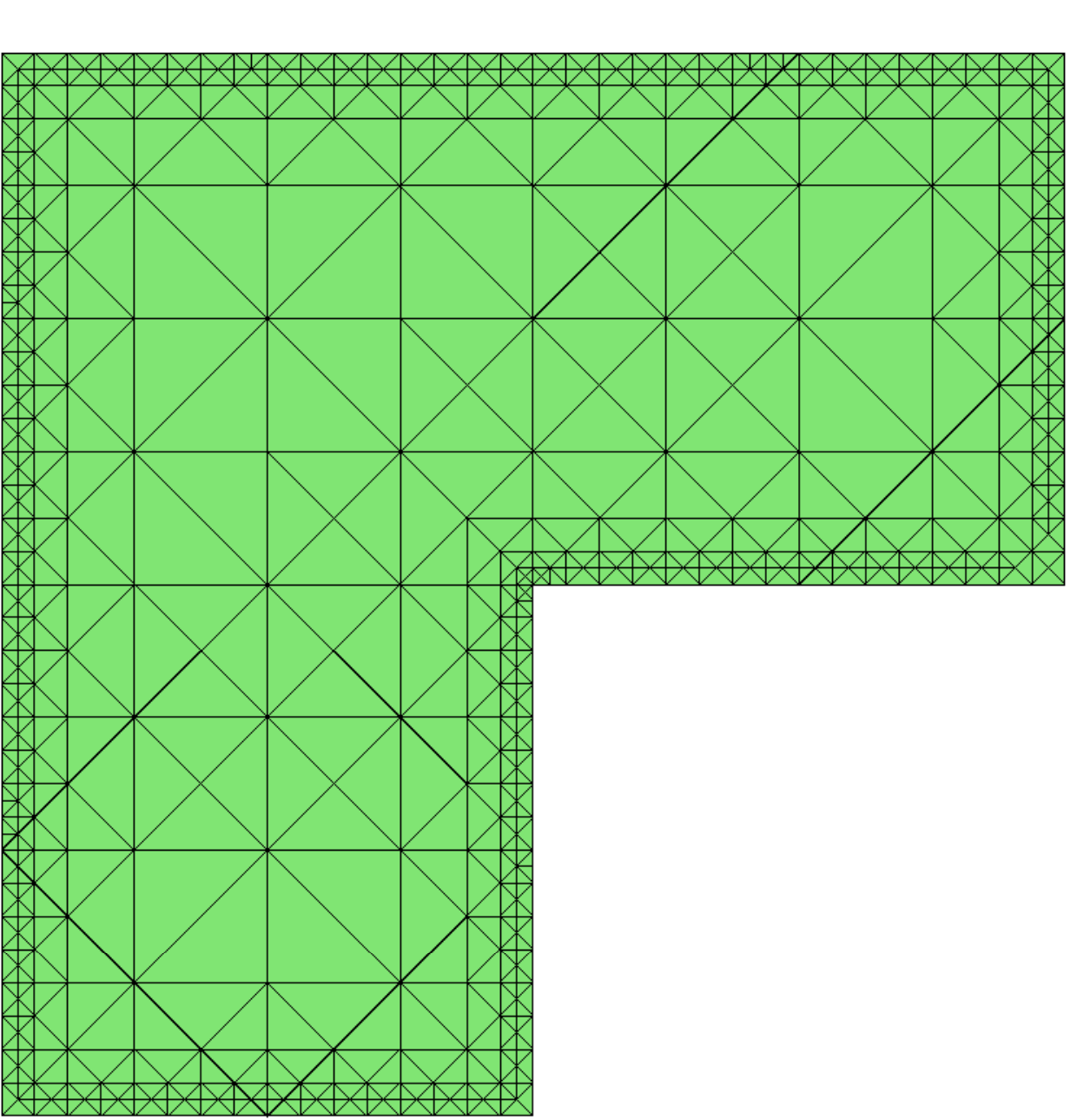}\quad
\includegraphics[width=0.31\textwidth]{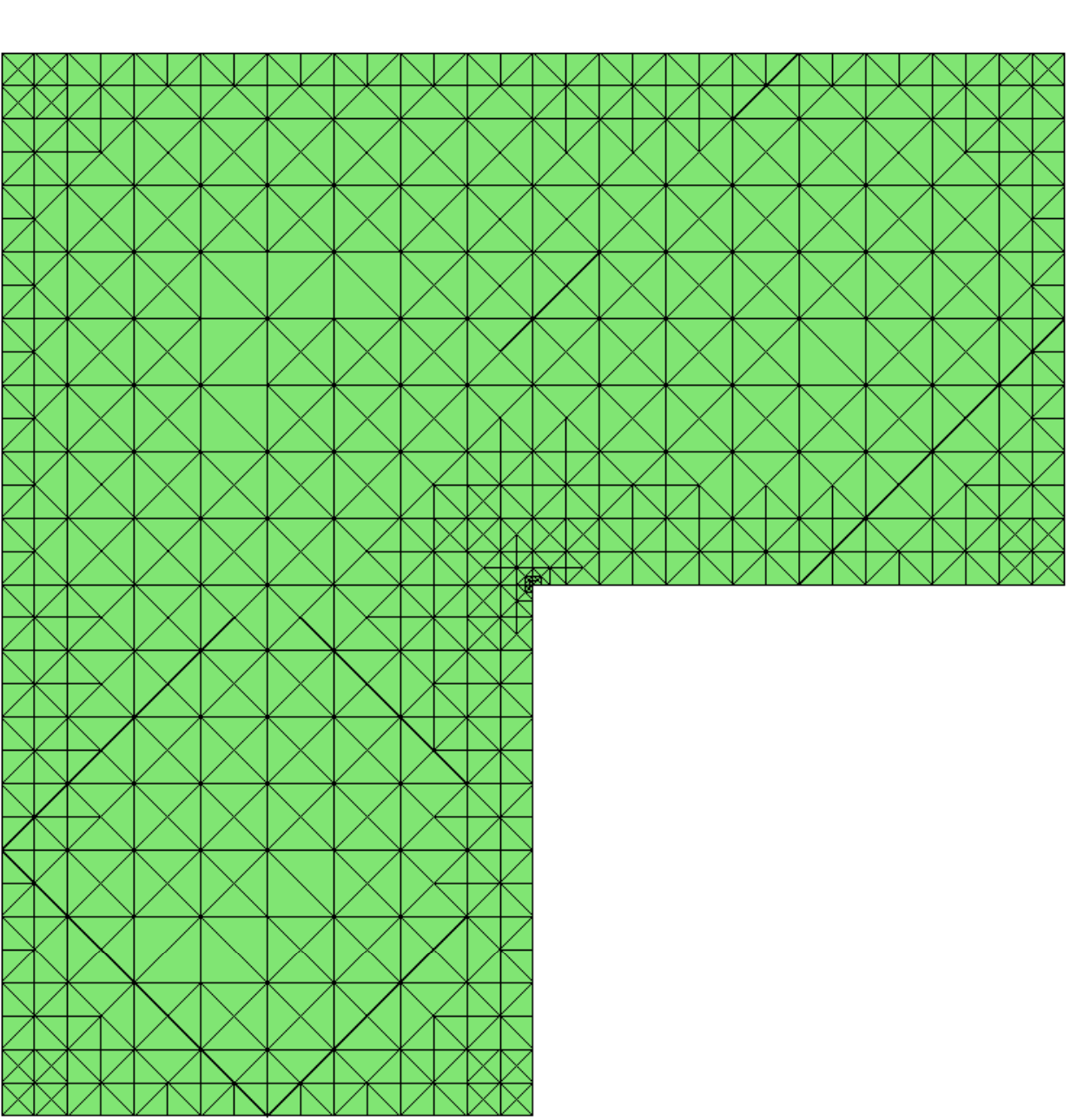}\quad
\caption{\label{f:lshape} The left panel shows the initial grid. The middle and right panels shows adaptive grids, obtained after 17 refinements, for $s = 0.2$ and $s = 0.8$, respectively. We consider an L-shaped domain with incompatible right hand side for the state and adjoint equations. As expected when $s < \frac{1}{2}$ the incompatible data ( $\ozsf, \usf_d \notin \Ws$) results in boundary layers for both the state and the adjoint state. In order to capture them, our AFEM refines near the boundary. In contrast, when $s > \frac{1}{2}$ the refinement is more pronounced near the reentrant corner; the data $\bar{\zsf}$ and $\usf_d$ are compatible in this case.}
\end{figure}

\bibliographystyle{plain}
\bibliography{biblio}

\end{document}